\documentclass[amsmath,secnumarabic,floatfix,amssymb,nofootinbib,nobibnotes,letterpaper,11pt,tightenlines]{revtex4}

\usepackage{times}


\usepackage{geometry}
\usepackage{amssymb}
\usepackage{latexsym, amsmath, amscd,amsthm}
\usepackage{graphicx}
\usepackage[percent]{overpic}
\usepackage{units}
\usepackage{hyperref}
\PassOptionsToPackage{caption=false}{subfig}
\usepackage[lofdepth]{subfig}
\usepackage{algorithm}
\usepackage{algorithmicx}
\usepackage{algpseudocode}

\newtheorem{theorem}{Theorem}

\newtheorem{proposition}[theorem]{Proposition}
\newtheorem{corollary}[theorem]{Corollary}

\theoremstyle{definition}
\newtheorem{definition}[theorem]{Definition}


\usepackage{units}
\newcommand{\R}{\mathbb{R}}
\newcommand{\I}{\mathbf{i}}
\newcommand{\Z}{\mathbb{Z}}

\newcommand{\dtheta}{\,\mathrm{d}\theta}
\newcommand{\dz}{\,\mathrm{d}z}
\newcommand{\dy}{\,\mathrm{d}y}
\newcommand{\sgn}{\operatorname{sgn}}

\newcommand{\dArea}{\,\mathrm{dArea}}
\newcommand{\Area}{\operatorname{Area}}
\newcommand{\RP}{\mathbb{RP}}

\bibliographystyle{plain}

\begin{document}

\title[]{Random triangles and polygons in the plane}
\author{Jason Cantarella}
\altaffiliation{Department of Mathematics, University of Georgia, Athens GA}
\noaffiliation
\author{Tom Needham}
\altaffiliation{Department of Mathematics, The Ohio State University, Columbus OH}
\noaffiliation
\author{Clayton Shonkwiler}
\altaffiliation{Department of Mathematics, Colorado State University, Fort Collins CO}
\noaffiliation
\author{Gavin Stewart}
\altaffiliation{Courant Institute of Mathematical Sciences, New York University, New York NY}
\noaffiliation

\begin{abstract}
We consider the problem of finding the probability that a random triangle is obtuse, which was first raised by Lewis Caroll. Our investigation leads us to a natural correspondence between plane polygons and the Grassmann manifold of 2-planes in real $n$-space proposed by Allen Knutson and Jean-Claude Hausmann. This correspondence defines a natural probability measure on plane polygons. In these terms, we answer Caroll's question. We then explore the Grassmannian geometry of planar quadrilaterals, providing an answer to Sylvester's four-point problem, and describing explicitly the moduli space of unordered quadrilaterals. All of this provides a concrete introduction to a family of metrics used in shape classification and computer vision.
\end{abstract}

\maketitle

\noindent
\begin{flushright}
The issue of choosing a ``random triangle'' is indeed problematic. I believe the difficulty is explained in large measure by the fact that there seems to be no natural group of transitive transformations acting on the set of triangles.\\

\vspace{0.2in}

--Stephen Portnoy\\
\textit{A Lewis Carroll pillow problem: Probability of an obtuse triangle}\\
Statistical Science, 1994
\end{flushright}

In 1895, the mathematician Charles L.\ Dodgson, better known by his pseudonym Lewis Carroll, published a book of 72 mathematical puzzles called ``pillow problems'', which he claimed to have solved while lying in bed. The pillow problems mostly concern discrete probability, but there is a single problem in continuous probability in the collection: 
\begin{quotation}
\noindent Three points are taken at random on an infinite plane. Find the chance of their being the vertices of an obtuse-angled triangle. 
\end{quotation}
This is a very appealing problem and a number of authors have tackled it in the years since. After a moment's thought, it is clear that the main issue here is that the problem is ill-posed-- since there is no translation-invariant probability distribution on the infinite plane, the problem must really refer to a natural probability distribution on the space of triangles. But what probability distribution on triangle space is the right one?
Portnoy~\cite{Portnoy:1994vh} presented several different solutions to the problem involving distributions on triangle space invariant under various groups of transformations; Edelman and Strang~\cite{Edelman:2015td} connect the problem to random matrix theory and shape statistics; Guy~\cite{Guy:1993ep} got the answer 3/4 for a variety of measures, and the legendary statistician David Kendall got exact answers when the vertices of the triangle were chosen at random in a convex body~\cite{Kendall:1985ep}. Interestingly, Carroll himself gave a solution, but his method gives two different answers under the assumptions that side AB is the longest or second-longest side of the triangle! 

In fact, this problem has an even earlier history. In 1861, the actuary and editor of the \emph{Lady's and Gentleman's Diary} W. S. B. Woolhouse posed the same problem for triangles in space~\cite{Woolhouse:1861we}. Readers, including Stephen Watson~\cite{Watson:1862ur} came up with the answers rediscovered by Hall~\cite{Hall:1982id} for triangles whose vertices are uniformly chosen in a disk or ball. In 1865 Woolhouse posed a related problem~\cite{Woolhouse:1865vw}:
\begin{quotation}
	\noindent Three lines being drawn at random on a plane, determine the probability that they will form an acute triangle.
\end{quotation}

With its focus on the edges of the triangle rather than the vertices, this version of the problem is more closely related to the approach we present in this paper, which is based on a highly symmetric representation of triangle space as a Grassmann manifold~\cite{Cantarella:2014bl}. We will see that the Grassmannian picture of triangle space really does have a very natural group of transformations, that the pillow problem has a natural answer in our terms,\footnote{Our answer is different than the one Woolhouse arrived at! See~\cite{Woolhouse:1866uw}.} and that this entire story generalizes to the study of polygons with an arbitrary number of edges.

\section{Two paths to a construction}

We start by fixing notation. As is usual in triangle geometry; we let $A, B, C$ refer to the vertices of a triangle, $a$, $b$, and $c$ denote the lengths of the corresponding (opposite) sides, and use $\alpha$, $\beta$, $\gamma$ for the corresponding angles. 

We now construct a measure on triangle space. Since the geometry of a triangle is determined by $a,b,c$, it is immediately natural to want to assign a measure to the positive orthant of triples $a \geq 0, b \geq 0, c \geq 0$ in $\R^3$. But this space is not compact, so one is led to fix a scale for the triangle, by assuming\footnote{Why not perimeter $1$? The theory is the same either way, but if we make that choice, there will be many messy denominators to keep track of later on.} that the perimeter $a + b + c = 2$.  

Here we diverge from the beaten path. Since $a$, $b$, and $c$ must obey the triangle inequalities $a+b \geq c$, $b+c \geq a$, and $c+a \geq b$, the space of triangles is actually a triangular cone inside the positive orthant. This motivates us to write things in terms of the new variables
\begin{equation*}
s_a = \frac{-a + b + c}{2}, \quad s_b = \frac{a - b + c}{2}, \quad s_c = \frac{a + b - c}{2}.
\end{equation*}
These variables have a long history in triangle geometry. Perhaps most naturally if we construct three mutually tangent circles at the vertices of the triangle, their radii are $s_a$, $s_b$, and $s_c$. However, they recur in various other triangle formulae: 
in Heron's formula for the triangle area, or as trilinear coordinates for the Mittenpunkt or barycentric coordinates for the Nagel point of the triangle. They have a number of neat properties; for instance, the semiperimeter of the triangle is $s_a + s_b + s_c = \nicefrac{1}{2}(a + b + c)$, so we can again restrict to fixed perimeter by assuming that $(s_a,s_b,s_c)$ lies on the plane $x+y+z=1$. But now the space of all triangles is the entire orthant $s_a \geq 0, s_b \geq 0, s_c \geq 0$. 

We now introduce our last set of variables: we can parametrize fixed perimeter triangle space by the unit sphere $x^2 + y^2 + z^2 = 1$ if we let 
\begin{equation*}
x^2 = s_a, \quad y^2 = s_b, \quad z^2 = s_c.
\end{equation*}
This is actually an eightfold cover of triangle space, but that won't make any difference to our calculations in probability. We can now solve for $a$, $b$, and $c$ from the equations above.

\begin{definition}
\label{def:triangles}
The \emph{symmetric measure} $\mu$ on the space of perimeter 2 triangles is given by the pushforward of the uniform probability measure on the unit sphere under the map 
\begin{equation*}
a = 1 - x^2, \quad b = 1 - y^2, \quad c = 1-z^2.
\end{equation*}
\end{definition}

The variables $x$, $y$, and $z$ appear in various places in the theory of the triangle. We leave to the reader the (pleasant) proof of the following proposition:
\begin{proposition}
\label{prop:connections}
Various standard quantities in triangle geometry have natural expressions in terms of the coordinates $x$, $y$, and $z$. For triangles with unit semiperimeter, 
\begin{itemize}
\item The inradius $r$ and triangle area $A$ are both $|xyz|$.
\item The three exradii $r_1$, $r_2$, and $r_3$ are $\left|\frac{xy}{z}\right|$, $\left|\frac{yz}{x}\right|$ and $\left|\frac{xz}{y}\right|$.
\item The variables $|x|$, $|y|$ and $|z|$ are the (pairwise) geometric means of the exradii.
\end{itemize}
From this, it is easy to prove, for instance, the appealing triangle geometry theorem that $r r_1 r_2 r_3 = A^2$.
\end{proposition}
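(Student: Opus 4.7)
The plan is to reduce every quantity in the proposition to the three classical identities
\[
A = \sqrt{s(s-a)(s-b)(s-c)}, \qquad A = rs, \qquad r_i = \frac{A}{s-a_i},
\]
where $s$ is the semiperimeter and $r_i$ is the exradius opposite the side of length $a_i$. Under the parametrization of \defn{triangles}, restricted to unit semiperimeter, the key observation is that $s-a=s_a=x^2$, $s-b=s_b=y^2$, $s-c=s_c=z^2$, and $s=x^2+y^2+z^2=1$.

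First I would compute the area: plugging the substitutions into Heron's formula yields $A = \sqrt{1\cdot x^2 y^2 z^2} = |xyz|$, and since $s=1$ the identity $A=rs$ gives $r=|xyz|$ as well. Next, for the exradii, $r_a = A/(s-a) = |xyz|/x^2 = |yz/x|$, and cyclically $r_b=|xz/y|$ and $r_c=|xy/z|$, matching the three expressions in the statement. The geometric-mean claim then follows by direct pairwise multiplication, e.g.
\[
\sqrt{r_a r_b} = \sqrt{\left|\tfrac{yz}{x}\right|\cdot\left|\tfrac{xz}{y}\right|} = \sqrt{z^2} = |z|,
\]
and similarly for the other two pairs.

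Finally, the appealing identity $r\, r_1 r_2 r_3 = A^2$ falls out immediately by multiplying all four expressions:
\[
r\, r_a r_b r_c = |xyz| \cdot \left|\tfrac{yz}{x}\right|\left|\tfrac{xz}{y}\right|\left|\tfrac{xy}{z}\right| = |xyz|\cdot|xyz| = (xyz)^2 = A^2.
\]
There is no real obstacle here; the only mild subtlety is to be consistent about which vertex each exradius is opposite to (so that the matching between $\{r_1,r_2,r_3\}$ and $\{|xy/z|,|yz/x|,|xz/y|\}$ is correct), and to note that the absolute values make the formulas well-defined on all eight sign-choices of $(x,y,z)$ on the sphere, consistent with the eight-fold cover mentioned before \defn{triangles}.
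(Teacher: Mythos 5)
Your proof is correct. The paper leaves this proposition to the reader, so there is no proof to compare against; your route through Heron's formula, $A=rs$, and $r_i = A/(s-a_i)$, together with the observations $s-a=x^2$, $s-b=y^2$, $s-c=z^2$, $s=1$, is exactly the ``pleasant'' computation the authors intend, and your remarks on exradius labeling and on the absolute values handling the eightfold cover are appropriate.
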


We have now defined a measure on triangle space, and it's clear from our construction that rotations of the sphere provide a beautiful, compact, transitive group of symmetries of triangles. This is already appealing, but one can immediately see that we have made various choices in the construction, and it is not clear how this construction would generalized to polygons with more edges. So now we start over and give another derivation of the same measure from a different point of view; this construction of polygon space is the one in our paper~\cite{Cantarella:2014bl} and is originally due to Knutson and Hausmann~\cite{Knutson:2_iyExxE}.

We will start by thinking of $\R^2$ as the complex plane. Since we are interested in polygons up to translation\footnote{We will deal with rotations shortly.}, we will represent a polygon by edges $e_1, \dots, e_n$, which are the complex numbers corresponding to the edge vectors. To fix perimeter, we want $|e_1| + \dots + |e_n| = 2$, but as before, we suspect that we will have more symmetries if we use the variables $z_1^2 = e_1, \dots, z_n^2 = e_n$ instead.

Now the polygon must close, so we are also imposing the condition $\sum e_i = 0$. If we write $z_i = u_i + v_i \I$, this condition becomes
\begin{equation}
0 = \sum z_i^2 = \sum (u_i^2 - v_i^2) + 2 u_i v_i \I.
\end{equation}
Rearranging, this is equivalent to $\sum u_i^2 = \sum v_i^2$ and $\sum u_i v_i = 0$. We have proved

\begin{proposition} 
Suppose $z_i = u_i + v_i \I$. The polygon with edges $z_1^2, \dots, z_n^2$ is closed and has perimeter 2 $\iff$ $\vec{u} = (u_1, \dots, u_n)$, $\vec{v} = (v_1, \dots, v_n)$ are orthonormal vectors in $\R^n$. 

Since squaring takes the $2^n$ points $(\pm z_1, \dots, \pm z_n)$ to the same edge set $(z_1^2, \dots, z_n^2)$, the Stiefel manifold of $V_2(\R^n)$ of orthonormal pairs of vectors in $\R^n$ is a $2^n$-fold cover of the space of polygons (up to translation) in the plane.\footnote{If one of the $z_i$ is zero, then $z_i=-z_i$ and the order of the cover is a lower power of 2, so this cover is actually branched over the points where some $z_i=0$, which correspond to the polygons with $i$th edge of length 0.}
\end{proposition}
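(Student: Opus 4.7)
The plan is a direct translation of the two defining conditions (closure and perimeter 2) into linear-algebraic statements about $(\vec{u}, \vec{v})$, followed by a preimage count for the squaring map. The perimeter-2 condition reads $\sum |z_i^2| = 2$; since $|z_i^2| = |z_i|^2 = u_i^2 + v_i^2$, this simplifies to $\|\vec{u}\|^2 + \|\vec{v}\|^2 = 2$. The closure condition $\sum z_i^2 = 0$ has already been unpacked in the paragraph preceding the proposition into the pair $\|\vec{u}\|^2 = \|\vec{v}\|^2$ and $\vec{u} \cdot \vec{v} = 0$. Combining these forces $\|\vec{u}\|^2 = \|\vec{v}\|^2 = 1$ together with $\vec{u} \perp \vec{v}$, which is exactly the orthonormal frame condition. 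The converse runs the same implications in reverse: given an orthonormal $(\vec{u}, \vec{v})$, one recovers both closure equations and perimeter 2 immediately.

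For the covering statement, I would view the squaring map $(\vec{u}, \vec{v}) \mapsto (z_1^2, \dots, z_n^2)$ as a map from $V_2(\R^n)$ onto polygon space (modulo translation). The fiber over a fixed edge set $(e_1, \dots, e_n)$ consists of all coordinatewise choices of square root. Each nonzero $e_i$ admits exactly two square roots $\pm z_i$, while $e_i = 0$ has only the single root $z_i = 0$, giving $2^n$ preimages in the generic case. To conclude that every one of these $2^n$ sign choices actually lies in $V_2(\R^n)$, I would note that negating a single $z_j$ flips the signs of $u_j$ and $v_j$ simultaneously, and the quadratic and bilinear forms $\|\vec{u}\|^2$, $\|\vec{v}\|^2$, $\vec{u} \cdot \vec{v}$ are all invariant under such simultaneous single-coordinate sign flips. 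Hence the orthonormality of $(\vec{u}, \vec{v})$ is preserved by the entire $(\Z/2)^n$ action, and the map is a genuine $2^n$-to-one cover over the open stratum where no edge vanishes, branching over the loci where some $e_i = 0$ in the manner described by the footnote.

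No step here presents a real obstacle; the argument is essentially bookkeeping, with the algebraic identity $|z_i^2| = u_i^2 + v_i^2$ doing all the work on the perimeter side and the invariance of the Stiefel conditions under the sign-flip action doing all the work on the covering side. The one point that I would call out explicitly, since it is where the statement could conceivably fail, is this last invariance—without it one could not guarantee that the different sign choices remain in $V_2(\R^n)$ rather than escaping to general pairs of vectors.
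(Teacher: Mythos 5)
Your proposal is correct and follows essentially the same route as the paper: the closure condition $\sum z_i^2=0$ unpacks to $\|\vec{u}\|^2=\|\vec{v}\|^2$ and $\vec{u}\cdot\vec{v}=0$, the perimeter condition gives $\|\vec{u}\|^2+\|\vec{v}\|^2=2$, and the $2^n$ sign choices account for the covering. Your explicit observation that the orthonormality conditions are invariant under the coordinatewise sign flips is a small but worthwhile addition that the paper leaves implicit.
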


An easy computation shows that rotating $(\vec{u},\vec{v})$ in the plane they span rotates the polygon (twice as fast) in the plane. This means that all pairs $(\vec{u},\vec{v})$ in the same plane give the same $n$-gon up to rotation, and so the space of $n$-gons up to translation and rotation is covered by the space of 2-planes in $\R^n$, which is the Grassmann manifold $G_2(\R^n)$.

\begin{definition}
\label{def:ngons}
The \emph{symmetric measure} $\mu$ on the space of $n$-gons with perimeter 2 is given by the pushforward of the uniform probability measure on the Grassmannian $G_2(\R^n)$ to polygon space under the map $P \mapsto e_i = (u_i + v_i \I)^2$, where $\vec{u}, \vec{v}$ are any orthonormal basis for the plane $P$.
\end{definition}

We have now given two definitions of the symmetric measure on triangle space, and must show they are the same. 

\begin{proposition}
The measures on triangle space of Definition~\ref{def:triangles} and Definition~\ref{def:ngons} are the same.
\end{proposition}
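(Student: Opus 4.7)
The plan is to exploit the special feature of dimension three: the Grassmannian $G_2(\R^3)$ is naturally a double cover of $S^2$ via the map that sends a $2$-plane to its unit normal vector (up to sign). Under this identification, I will show that the map $P \mapsto (|e_1|, |e_2|, |e_3|)$ of \defn{ngons} becomes precisely the map $(x,y,z) \mapsto (1-x^2, 1-y^2, 1-z^2)$ of \defn{triangles}, and that the uniform measures correspond correctly.

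First, set up the correspondence. Given a $2$-plane $P \subset \R^3$, let $\vec{w} = (x,y,z) \in S^2$ be a unit normal to $P$, which is well-defined up to sign. The assignment $P \leftrightarrow \{\pm \vec{w}\}$ gives an identification $G_2(\R^3) = S^2/\{\pm 1\} = \RP^2$. Since both sides are homogeneous spaces of $O(3)$, each carries a unique $O(3)$-invariant probability measure, and the uniform measure on $G_2(\R^3)$ pulls back along the double cover $S^2 \to \RP^2$ to the uniform probability measure on $S^2$.

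Next, compute $|e_i|$ in terms of $\vec{w}$. Pick any orthonormal basis $(\vec{u}, \vec{v})$ of $P$ and extend to an orthonormal basis $(\vec{u}, \vec{v}, \vec{w})$ of $\R^3$. Writing this basis as the columns of a $3 \times 3$ orthogonal matrix, its rows are also unit vectors, so $u_i^2 + v_i^2 + w_i^2 = 1$ for each $i \in \{1,2,3\}$. Therefore
\begin{equation*}
|e_i| = |u_i + v_i \I|^2 = u_i^2 + v_i^2 = 1 - w_i^2,
\end{equation*}
which, as a bonus, makes it manifest that the edge lengths depend only on $P$ and not on the choice of orthonormal basis. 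With $(x,y,z) = \vec{w}$, this yields exactly $a = 1 - x^2$, $b = 1 - y^2$, $c = 1 - z^2$, matching \defn{triangles}. Since the space of perimeter-$2$ triangles up to direct isometry is parametrized by the edge triple $(a,b,c)$ (satisfying the triangle inequality and $a+b+c = 2$), both measures live on the same space.

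Finally, because the map $(x,y,z) \mapsto (1-x^2, 1-y^2, 1-z^2)$ is invariant under the antipodal map, it factors through the quotient $S^2 \to \RP^2$, so the pushforward is the same whether we start from the uniform measure on $S^2$ or the uniform measure on $G_2(\R^3)$. The only real obstacle is the (innocuous) observation that rows of an orthogonal matrix are unit vectors; once that is in hand, the identification of the two pushforward measures is immediate. Note that this argument is special to $n=3$, since for larger $n$ the Grassmannian $G_2(\R^n)$ has no such description in terms of a sphere.
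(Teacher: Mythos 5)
Your proposal is correct and follows essentially the same route as the paper's own proof: identify $G_2(\R^3)$ with $\RP^2$ via unit normals so that both measures push forward from the uniform measure on $S^2$, then use the fact that the rows of an orthogonal matrix are unit vectors to get $|e_i| = u_i^2 + v_i^2 = 1 - w_i^2$, matching Definition~\ref{def:triangles}. No gaps; your added remark that the edge lengths are manifestly basis-independent is a nice touch but not a departure from the paper's argument.
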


\begin{proof} We can identify $G_2(\R^3)$ with $G_1(\R^3)$ by taking normals to the planes; but $G_1(\R^3) = \R P^2$, which is double covered by $S^2$. The uniform measure on the Grassmannian is the pushforward of the uniform measure on this sphere. This means that both measures push forward from the standard measure on the sphere; we just need to check that a given point on the sphere maps to the same triangle under each construction.

If we take a point $\vec{p} = (x,y,z)$ on the sphere, the corresponding triangle is obtained using Definition~\ref{def:ngons} by completing $\vec{p}$ to a positive determinant orthonormal basis for $\R^3$ by adding the vectors $\vec{u}$ and $\vec{v}$. Any such choices of $\vec{u}$ and $\vec{v}$ will produce the same triangle shape since different choices will be related by a rotation of the triangle.

Given $\vec{u}$ and $\vec{v}$, the three vectors $\vec{p}$, $\vec{u}$, and $\vec{v}$ are the columns of an orthogonal matrix. The norm of each \emph{row} of the matrix is $1$, so for example $u_1^2 + v_1^2 = 1 - x^2$. But $u_1^2 + v_1^2 = \left|(u_1 + v_1 \I)^2 \right| = |e_1| = a$ is the length of the first side of the triangle, so we have shown that $a = 1 - x^2$. Similarly, $b = 1 - y^2$ and $c = 1 - z^2$, as they should be according to Definition~\ref{def:triangles}.
\end{proof}

\begin{definition}
When sidelength $c = 1 - z^2 \neq 0$, we can define a \emph{canonical triangle} associated to $\vec{p} = (x,y,z)$ by choosing our basis for the plane perpendicular to $\vec{p}$ to be
\begin{equation}
\left(
\begin{array}{c}
\vec{u} \\
\vec{v}
\end{array}
\right) =
\frac{1}{\sqrt{1 - z^2}}
\left(
\begin{array}{ccc}
 x z & y z & -x^2-y^2 \\
-y & x & 0 \\
\end{array}
\right)
\end{equation}
It is easy to check that $\vec{p}, \vec{u}, \vec{v}$ is a positively oriented orthonormal basis for $\R^3$. Continuing to unwind our definitions, if we place the center of edge $c$ at the origin, then edge $c = e_3$ points in the positive $x$-direction, and the vertices of the triangle are 
\begin{equation}
-\frac{1}{2}(1-z^2,0) \quad \frac{1}{2}(1-z^2,0) \quad \left(
\frac{-x^4+2 x^2 -2 y^2 + y^4}{2(1 - z^2)},-\frac{2xyz}{1 - z^2}
   \right).
   \label{eq:canonical triangle}
\end{equation}
\end{definition}
This observation is helpful when performing computer experiments.

\section{A transitive group of isometries on polygon space}

We noted above that the Grassmannian $G_2(\R^n)$ has a uniform measure; this is simply the unique probability measure on $G_2(\R^n)$ which is invariant under the left action of $O(n)$ on $G_2(\R^n)$. But what does this action look like on triangle space? We can start by describing the action of $SO(3)$ (rotations) on the sphere of triangles in our coordinates above: rotating around a coordinate axis fixes a sidelength, and must therefore move the opposite vertex around an ellipse as the perimeter of the triangle is fixed:

\begin{proposition}
The circle $(\sqrt{1-z^2} \cos \theta,\sqrt{1-z^2} \sin \theta,z)$ formed by rotating a point on $S^2$ around the $z$-axis maps to a family of canonical triangles where vertices $A$ and $B$ are fixed and vertex $C$ follows the ellipse
\begin{equation}
C(\theta) = \left(\frac{1+z^2}{2} \cos 2\theta,-z \sin 2 \theta\right).
\end{equation} 
\end{proposition}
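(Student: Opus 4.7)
The plan is direct substitution of the parametrization $(x,y,z) = (\sqrt{1-z^2}\cos\theta,\sqrt{1-z^2}\sin\theta,z)$ into the canonical triangle formula \eqref{eq:canonical triangle}. Since the vertex $A = -\tfrac{1}{2}(1-z^2,0)$ and $B = \tfrac{1}{2}(1-z^2,0)$ depend only on $z$, they are manifestly fixed by the rotation, so the entire content of the proposition reduces to computing the two coordinates of $C(\theta)$.

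For the first coordinate of $C$, I would reduce the numerator $-x^4 + 2x^2 - 2y^2 + y^4$ using the identities $x^2 + y^2 = 1 - z^2$ and $x^2 - y^2 = (1-z^2)\cos 2\theta$ that follow from the parametrization. Factoring $-x^4 + y^4 = -(x^2-y^2)(x^2+y^2) = -(1-z^2)^2\cos 2\theta$ and combining with $2x^2 - 2y^2 = 2(1-z^2)\cos 2\theta$, the numerator collapses to $(1-z^2)\bigl(2 - (1-z^2)\bigr)\cos 2\theta = (1-z^2)(1+z^2)\cos 2\theta$. Dividing by $2(1-z^2)$ yields the claimed first coordinate $\tfrac{1+z^2}{2}\cos 2\theta$.

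For the second coordinate, the computation is even shorter: $2xy = (1-z^2)\sin 2\theta$ by the double-angle identity, so $-\frac{2xyz}{1-z^2} = -z\sin 2\theta$, as required.

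There is no genuine obstacle here beyond careful bookkeeping with the quartic terms; the main thing to verify is the telescoping in the first coordinate, which is what produces the asymmetric ellipse with semi-axes $\tfrac{1+z^2}{2}$ and $|z|$ (rather than a circle) and explains how the $\cos 2\theta$ and $\sin 2\theta$ arise from the rotation parameter $\theta$, reflecting the ``twice as fast'' phenomenon noted earlier when squaring the complex edge-vector coordinates.
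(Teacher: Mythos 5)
Your computation is correct and is exactly the route the paper takes (which it leaves as ``a pleasant exercise in plugging the parametrization of the circle into the canonical triangle formula and simplifying''): the identities $x^2+y^2=1-z^2$, $x^2-y^2=(1-z^2)\cos 2\theta$, and $2xy=(1-z^2)\sin 2\theta$ collapse the quartic numerator to $(1-z^2)(1+z^2)\cos 2\theta$ just as you describe. Nothing is missing; the observation that $A$ and $B$ depend only on $z$ correctly disposes of the fixed-vertex claim.
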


Note that the ellipse is parametrized \emph{clockwise} if $z > 0$ and counterclockwise only if $z < 0$, and that the map double-covers the ellipse. Moreover, this is the equal-area-in-equal-time parametrization of the ellipse, attesting to the naturality of this construction. The proof of the proposition is a pleasant exercise in plugging the parametrization of the circle into~\eqref{eq:canonical triangle} and simplifying.

There is a rather interesting open question here: what characterizes the family of triangles obtained by an arbitrary rotation of the sphere? Infinitesimal rotations of the sphere are linear combinations of coordinate axis rotations, so we know that the family is given by integrating infinitesimal linear combinations of the above elliptical vertex motions. The resulting pictures are certainly pretty (we show an example in Figure~\ref{fig:triangle rotation}), but we do not yet have a fully triangle-theoretic description of this family. 

\begin{figure}[htbp]
	\centering
		\includegraphics[scale=.4]{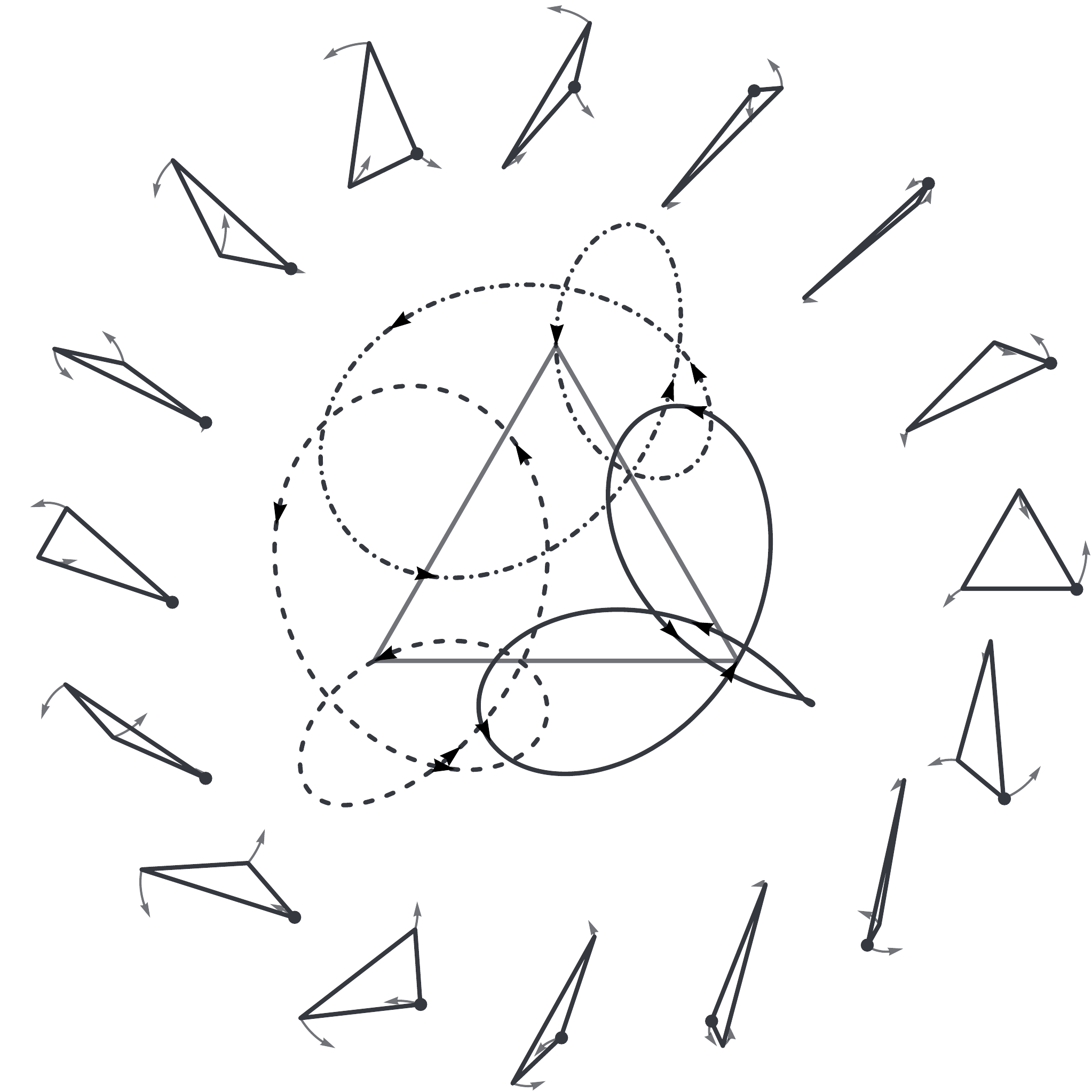}
	\caption{These are two different visualizations of the triangle motion induced by rotating the point $\frac{1}{\sqrt{3}}(1,1,1)$ corresponding to the equilateral triangle around the axis $\left(-1,1,-\sqrt{2}\right)$. The circle of triangles shows 16 equally-spaced points along the resulting great circle together with the path each vertex will traverse in the next time step. The figure in the middle shows the starting equilateral triangle along with the three curves traced out by the vertices. The solid curve is the path of the vertex marked with a dot in the outside triangles.}
	\label{fig:triangle rotation}
\end{figure}

\section{The Pillow Problem}

We are now in a position to answer Lewis Carroll's pillow problem, which boils down to identifying the obtuse triangles as a subset of the unit sphere and computing its area. Recall that a point $(x,y,z)$ on the unit sphere maps to the triangle with sidelengths 
\[
	a=1-x^2, \quad b=1-y^2, \quad c=1-z^2.
\]

The sphere can be split up into two (disconnected) regions: the acute triangles and the obtuse triangles; of course, the right triangles are the boundary between regions. But right triangles are easy to identify from the sidelengths: they are exactly the triangles such that $a^2+b^2=c^2$ or $b^2+c^2=a^2$ or $c^2+a^2=b^2$. Substituting in the above expressions for $a,b,c$ yields three quartics: 
\begin{equation}\label{eq:right triangle quartic}
	(1-x^2)^2+(1-y^2)^2=(1-z^2)^2
\end{equation}
and the two other cyclic permutations of the variables. The intersections of these quartics with the sphere give the collection of curves shown in Figure~\ref{fig:right triangle curves}.

\begin{figure}[htbp]
	\centering
		\includegraphics[height=2in]{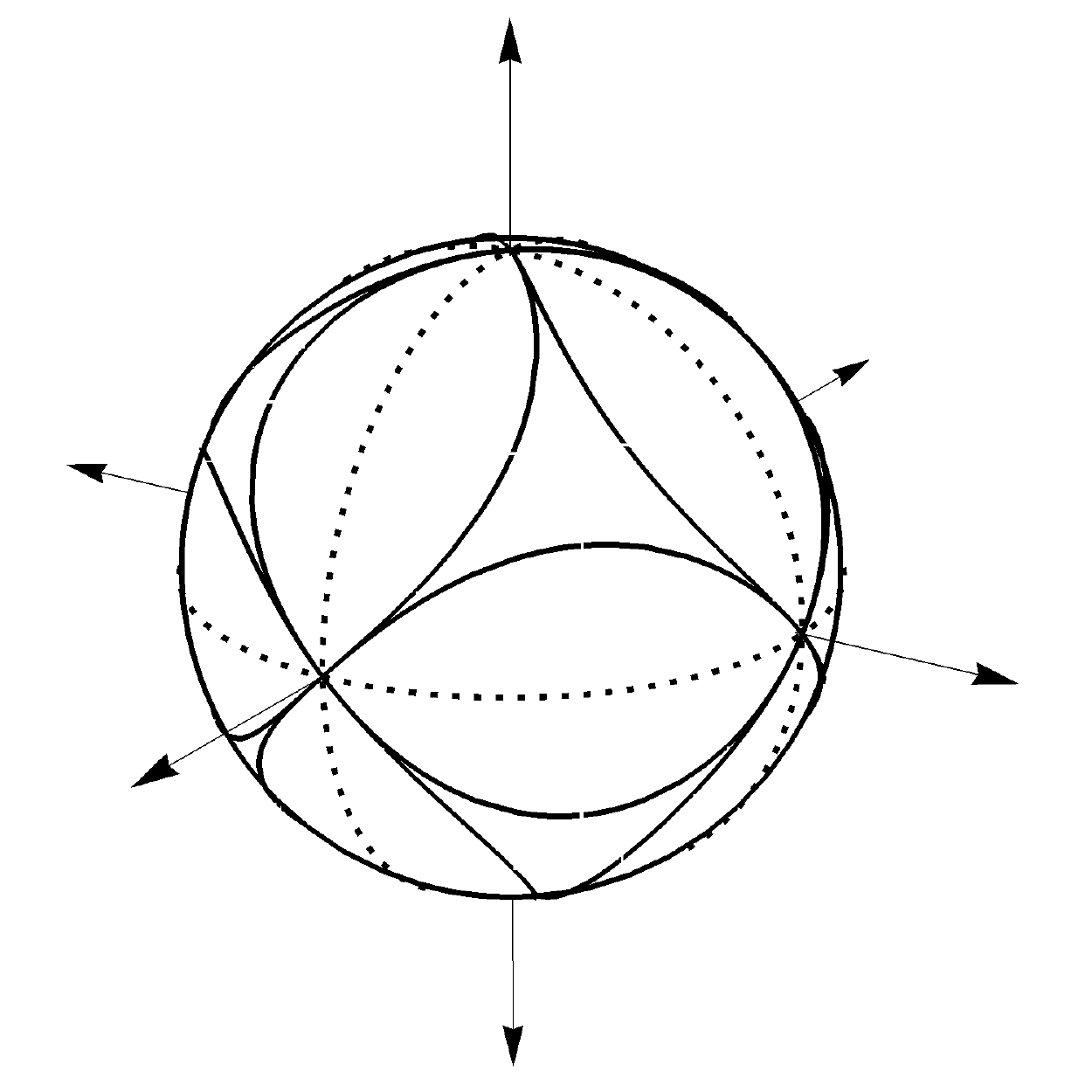}
		\includegraphics[height=2in]{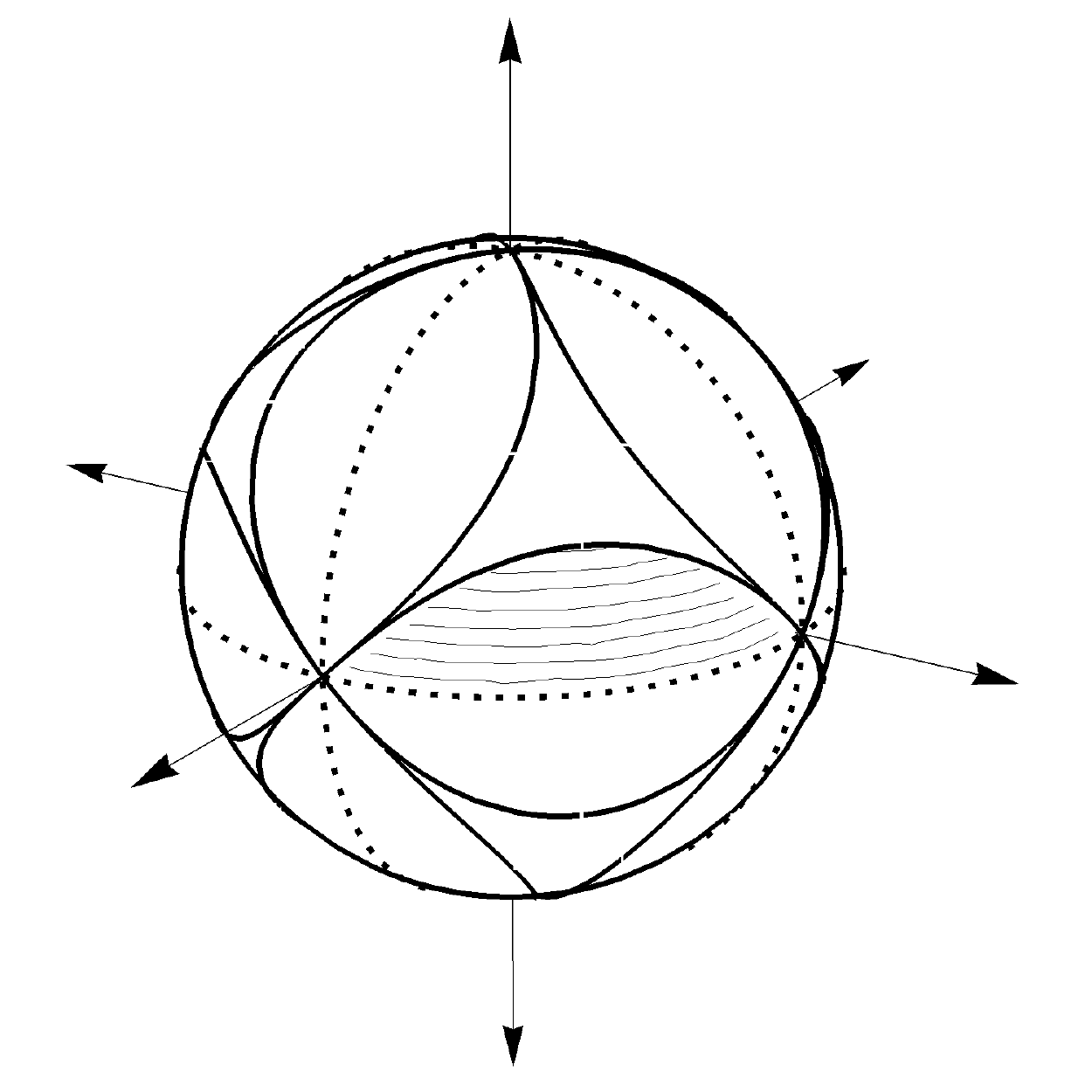}
	\caption{The right triangles are the heavy black curves on the sphere (the dotted lines indicate the intersections of the sphere with the coordinate planes). The hatched region in the right hand figure shows $\nicefrac{1}{24}$ of the region of obtuse triangles. We compute the area of this region below.}
	\label{fig:right triangle curves}
\end{figure}

The sphere equation $x^2+y^2+z^2=1$ implies $z^2=1-x^2-y^2$, so \eqref{eq:right triangle quartic} can be re-written as
\[
	x^2+x^2y^2+y^2=1.
\]
Equivalently, $x^2=\frac{1-y^2}{1+y^2}$, which can be plugged into $z^2=1-x^2-y^2$ to get the following parametrization  for solutions of \eqref{eq:right triangle quartic}:
\begin{equation}\label{eq:right triangle curve parametrization}
	\left(\pm \sqrt{\frac{1-y^2}{1+y^2}},\pm y,\pm y \sqrt{\frac{1-y^2}{1+y^2}}\right).
\end{equation}

Computing in cylindrical coordinates, the area of the set of obtuse triangles is simply $24 \iint_R \dz \dtheta$, where $R$ is the hatched region shown at right in Figure~\ref{fig:right triangle curves}.


In turn, using Stokes' Theorem,
\[
	24 \iint_R \dz \dtheta = 24 \int_{\partial R} z \dtheta = 24\left( \int_{z=0} z \dtheta + \int_C z \dtheta \right),
\]
where $C$ is the upper boundary of the region parametrized by \eqref{eq:right triangle curve parametrization} with all signs positive. Of course, the first integral vanishes, so we are reduced to computing the second integral. Using \eqref{eq:right triangle curve parametrization} in conjunction with $\theta = \arctan(y/x)$ to simplify yields
\[
	24 \int_0^1 \left(\frac{2y}{1+y^4}-\frac{y}{1+y^2}\right)\dy.
\]
Both terms are easy to integrate using $u$-substitutions: the first is recognizably the derivative of $\arctan(y^2)$, while the second is the derivative of $-\frac{1}{2}\ln(1+y^2)$, so the area of the obtuse triangles is
\[
	24 \left[ \arctan(y^2) -\frac{1}{2}\ln(1+y^2)\right]_0^1 = 6\pi-12 \ln 2.
\]
Dividing by the area $4\pi$ of the sphere reveals the fraction of obtuse triangles to be exactly
\[
	\frac{3}{2}-\frac{3 \ln 2}{\pi} \approx 0.838093.
\]


\section{Dirichlet distributions and expected areas of triangles}

Now that we have solved Carroll's problem, it's interesting to see what other expectations we can compute! Writing things in terms of the variables $s_a = \frac{-a+b+c}{2}$, $s_b$ and $s_c$, for instance, leads us to a really nice computation of the expectation of inradius (or area) and circumcurvature. Given a point $(x,y,z)$ on the unit sphere, the corresponding triangle has $s_a=x^2$, $s_b=y^2$, and $s_c=z^2$. Then $\varphi:(x,y,z)\mapsto (x^2,y^2,z^2)$ gives a map from the unit sphere $S^2$ to the simplex $\{s_a+s_b+s_c=1\}$.

\begin{proposition}\label{prop:pushforward dirichlet measure}
	The pushforward by $\varphi$ of the uniform measure on $S^2$ is the Dirichlet distribution on the simplex with concentration parameters $(\nicefrac{1}{2},\nicefrac{1}{2},\nicefrac{1}{2})$. This measure is $\frac{1}{\Area(\Delta ABC)} \,dx\,dy$.
\end{proposition}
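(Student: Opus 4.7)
The proposition makes two assertions that turn out to be two faces of the same calculation: that the pushforward has the Dirichlet$(\nicefrac12,\nicefrac12,\nicefrac12)$ density on the simplex, and that this density can be written as a constant multiple of $\frac{1}{\Area(\Delta ABC)}$. I would set up the first assertion as a direct change-of-variables computation and then use the area formula $A = |xyz| = \sqrt{s_a s_b s_c}$ from \prop{connections} (valid for unit semiperimeter) to obtain the second assertion for free.

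For the change of variables, the plan is to restrict to the positive octant of $S^2$ and parametrize it by $(x,y)$ with $z = \sqrt{1 - x^2 - y^2}$. In this chart the uniform probability measure on $S^2$ has the standard density $\frac{1}{4\pi z}\,dx\,dy$. Since $\varphi$ sends $(x,y) \mapsto (s_a, s_b) = (x^2, y^2)$ on the octant, with $s_c = z^2$, the Jacobian gives $ds_a\,ds_b = 4xy\,dx\,dy = 4\sqrt{s_a s_b}\,dx\,dy$. Combining this with the sphere density yields the contribution
\begin{equation*}
\frac{1}{16\pi \sqrt{s_a s_b s_c}}\,ds_a\,ds_b
\end{equation*}
from one octant. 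Because $\varphi$ is $8$-to-$1$ from $S^2$ onto the simplex (the eight octants each map injectively onto the open simplex by the same formula, and all eight contributions coincide under the identification), I would multiply by $8$ to obtain the total pushforward density $\frac{1}{2\pi\sqrt{s_a s_b s_c}}\,ds_a\,ds_b$ on the simplex parametrized by $(s_a, s_b)$ with $s_c = 1 - s_a - s_b$.

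Finally, I would recognize this as exactly the Dirichlet density with parameters $(\nicefrac12,\nicefrac12,\nicefrac12)$, whose normalizing constant is $\Gamma(\nicefrac32)/\Gamma(\nicefrac12)^3 = \frac{1}{2\pi}$, so that the density is $\frac{1}{2\pi}(s_a s_b s_c)^{-1/2}$ on the projected simplex; it is worth verifying the normalization by invoking the classical Dirichlet beta integral to confirm that this does integrate to $1$. The second assertion then follows immediately by substituting $\sqrt{s_a s_b s_c} = |xyz| = A$ from \prop{connections}, giving density $\frac{1}{2\pi A}\,dx\,dy$ (where I take the $dx\,dy$ in the statement to mean Lebesgue measure in the simplex coordinates, up to the universal $2\pi$ normalization).

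There is no genuine obstacle here; the only places to be careful are the Jacobian bookkeeping in the change of variables and correctly accounting for the $8$-fold cover, together with matching the Dirichlet normalizing constant with the factor that comes out of the Jacobian computation.
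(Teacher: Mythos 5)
Your proof is correct and follows essentially the same route as the paper: an $8$-to-$1$ change of variables from the uniform measure on $S^2$ to the simplex, yielding the density $\frac{1}{2\pi}(s_a s_b s_c)^{-1/2}$, identified with the Dirichlet$(\nicefrac12,\nicefrac12,\nicefrac12)$ density and rewritten via Heron's formula. The only difference is cosmetic: you compute the Jacobian through the graph chart $z=\sqrt{1-x^2-y^2}$ in two planar steps, while the paper computes the intrinsic Jacobian $|J\varphi|=4|xyz|$ directly from the Gramian of the gradients on $S^2$; both give the same factor.
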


\begin{proof}
	We will use $x$ and $y$ as coordinates on the simplex. In these coordinates, the simplex is parametrized by the triangle $x+y \leq 1$, $x\geq0$, $y\geq0$. The density of the uniform probability measure on $S^2$ (with respect to the standard area form), is just the constant function $\frac{1}{4\pi}$. But then, since $\varphi$ is an 8-to-1 map, the change-of-variables formula tells us that the density of the pushforward measure on the simplex $\Delta$ is 
	\begin{equation}\label{eq:pushforward density}
		8 \frac{1}{4\pi} \frac{1}{|J \varphi|},
	\end{equation}
	where $|J \varphi|$ is the Jacobian determinant of $\varphi$.
	
	Now, we can compute the Jacobian determinant by taking the square root of the determinant of the Gramian of the $3 \times 2$ matrix $\Phi = (\nabla \varphi_1 , \nabla \varphi_2)$, where $\nabla \varphi_i$ is the intrinsic gradient in $S^2$ of the coordinate function $\varphi_i$. Since
	\[
		\nabla \varphi_1 = \begin{pmatrix}
			2x \\ 0 \\ 0
		\end{pmatrix} - 2x \begin{pmatrix}
			x \\ y \\ z
		\end{pmatrix} \quad \text{ and } \quad \nabla\varphi_2 = \begin{pmatrix}
			0 \\ 2y \\ 0
		\end{pmatrix} - 2y \begin{pmatrix}
			x \\ y \\ z
		\end{pmatrix},
	\]
	it is straightforward to compute 
	\[
		|J \varphi|=\sqrt{\det \Phi^T \Phi} = 4|xyz|=4 \sqrt{s_a s_b s_c}.
	\]
	Combining this with \eqref{eq:pushforward density}, the density of the measure on $s_a + s_b + s_c = 1$ is
	\begin{equation}\label{eq:pushforward density 2}
		\psi(s_a,s_b)=\frac{1}{2\pi} s_a^{-\nicefrac{1}{2}} s_b^{-\nicefrac{1}{2}} s_c^{-\nicefrac{1}{2}},
	\end{equation}
	which is the density of the Dirichlet distribution, as claimed. Since we've fixed the semiperimeter $s=1$ for our triangles, we have $\sqrt{s_a s_b s_c} = \sqrt{s s_a s_b s_c}$, which Heron's formula says is the area of the triangle.
\end{proof}

\begin{corollary}\label{cor:expected area}
	The expected value of the area of a perimeter-2 triangle with respect to the symmetric measure is
	\[
		E(\text{Area}) = \frac{1}{4\pi}.
	\]
\end{corollary}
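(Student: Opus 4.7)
The plan is to simply integrate the area function against the pushforward measure produced by Proposition~\ref{prop:pushforward dirichlet measure}, and observe a beautiful cancellation.

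First, I would recall from Proposition~\ref{prop:pushforward dirichlet measure} that pushing forward by $\varphi$ identifies triangle space with the 2-simplex $\Delta = \{s_a + s_b + s_c = 1,\ s_a,s_b,s_c \geq 0\}$, equipped with density
\[
	\psi(s_a,s_b) = \frac{1}{2\pi}\, s_a^{-1/2} s_b^{-1/2} s_c^{-1/2}
\]
with respect to $dx\,dy$ (using $x=s_a$, $y=s_b$, and $s_c = 1-x-y$). Since $\varphi$ is a measure-preserving identification of the symmetric measure on triangle space with this Dirichlet measure, the expected area can be computed as an integral over the simplex rather than over the sphere.

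Next, I would invoke Heron's formula. With semiperimeter $s=1$, the area of the triangle corresponding to $(s_a,s_b,s_c)$ is
\[
	\mathrm{Area} = \sqrt{s\, s_a s_b s_c} = \sqrt{s_a s_b s_c} = s_a^{1/2} s_b^{1/2} s_c^{1/2}.
\]
The key observation is that multiplying this by $\psi$ produces an exact cancellation of every factor of $s_a$, $s_b$, and $s_c$:
\[
	\mathrm{Area}(s_a,s_b,s_c)\cdot \psi(s_a,s_b) = \frac{1}{2\pi}.
\]

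Finally, I would integrate this constant density over the region of the $(x,y)$-plane parametrizing the simplex, namely the right triangle $\{x\geq 0,\ y\geq 0,\ x+y\leq 1\}$, whose area is $\frac{1}{2}$. Therefore
\[
	E(\mathrm{Area}) = \iint_{\Delta} \mathrm{Area}\cdot \psi\, dx\,dy = \frac{1}{2\pi}\cdot \frac{1}{2} = \frac{1}{4\pi},
\]
as claimed. There is no real obstacle here; the content of the corollary is entirely absorbed into the Dirichlet computation of Proposition~\ref{prop:pushforward dirichlet measure}, and the punchline is simply that Heron's formula exactly inverts the Dirichlet density.
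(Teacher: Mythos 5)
Your proof is correct and is exactly the argument the paper intends: the corollary is stated without proof precisely because the density $\psi = \frac{1}{2\pi}(s_a s_b s_c)^{-1/2}$ is the reciprocal of $2\pi\cdot\mathrm{Area}$ by Heron's formula with $s=1$, so the expectation reduces to $\frac{1}{2\pi}$ times the area $\frac12$ of the parametrizing triangle. Nothing is missing.
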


The expression $\sqrt{s s_a s_b s_c}$ also appears in the formula for the circumradius of a triangle:
\[
	\frac{a b c}{4 \sqrt{s s_a s_b s_c}}
\]
While the expected value of the circumradius diverges, the expected value of its reciprocal -- that is, the expected curvature of the circumcircle -- does not:

\begin{corollary}\label{cor:expected circumcurvature}
	With respect to the symmetric measure on triangles with perimeter 2, the expected value of the curvature of the circumcircle is
	\[
		E(\text{circumcurvature}) = \frac{\pi}{2}.
	\]
\end{corollary}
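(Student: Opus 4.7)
The plan is to use \prop{pushforward dirichlet measure} to reduce the expectation to an elementary double integral on the simplex of semiperimeter variables, and then to a single classical one‑variable integral. The first step is to rewrite the circumcurvature $\kappa = 1/R = 4A/(abc)$ in the $s_a, s_b, s_c$ coordinates. Heron's formula together with semiperimeter $s=1$ gives $A = \sqrt{s_a s_b s_c}$, and the identities $a = 1 - s_a$, $b = 1 - s_b$, $c = 1 - s_c$ turn this into
\begin{equation*}
\kappa = \frac{4\sqrt{s_a s_b s_c}}{(1-s_a)(1-s_b)(1-s_c)}.
\end{equation*}

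The crucial observation is that multiplying $\kappa$ by the Dirichlet density $\frac{1}{2\pi}(s_a s_b s_c)^{-1/2}$ cancels the square root exactly, leaving a purely rational integrand. Using $(s_a,s_b)$ as coordinates on the simplex and $1 - s_c = s_a + s_b$, \prop{pushforward dirichlet measure} then gives
\begin{equation*}
E(\text{circumcurvature}) = \frac{2}{\pi}\int_0^1 \int_0^{1-s_a} \frac{ds_b\, ds_a}{(1-s_a)(1-s_b)(s_a+s_b)}.
\end{equation*}

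The inner integral is a routine partial‑fractions computation: splitting $\frac{1}{(1-s_b)(s_a+s_b)} = \frac{1}{1+s_a}\bigl(\frac{1}{1-s_b} + \frac{1}{s_a+s_b}\bigr)$ and integrating from $0$ to $1 - s_a$ produces $-\frac{2\ln s_a}{1+s_a}$. After this step the expectation collapses to
\begin{equation*}
E(\text{circumcurvature}) = -\frac{4}{\pi}\int_0^1 \frac{\ln s_a}{1 - s_a^2}\, ds_a.
\end{equation*}

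The hard part will be evaluating this one‑dimensional integral in closed form. The cleanest route is to expand $(1-s^2)^{-1}$ as a geometric series in $s^2$ and use $\int_0^1 s^{2n}\ln s\, ds = -(2n+1)^{-2}$, which identifies the integral as $-\sum_{n\ge 0}(2n+1)^{-2} = -\pi^2/8$, the classical odd reciprocal squares sum. (Alternatively, one can decompose $1/(1-s^2) = \tfrac12[1/(1-s) + 1/(1+s)]$ and combine the standard values $\int_0^1 \ln s/(1-s)\, ds = -\pi^2/6$ and $\int_0^1 \ln s/(1+s)\, ds = -\pi^2/12$.) Either way, substitution gives $E(\text{circumcurvature}) = \frac{4}{\pi} \cdot \frac{\pi^2}{8} = \frac{\pi}{2}$, as claimed.
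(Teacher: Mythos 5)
Your proposal is correct and follows the same route as the paper: both use Proposition~\ref{prop:pushforward dirichlet measure} to cancel the $\sqrt{s_a s_b s_c}$ factors and reduce the expectation to the double integral $\frac{2}{\pi}\int_0^1\int_0^{1-s_a}\frac{\mathrm{d}s_b\,\mathrm{d}s_a}{(1-s_a)(1-s_b)(s_a+s_b)}$. The only difference is that the paper simply asserts this integral equals $\pi/2$, whereas you supply the evaluation in full (partial fractions, then $\int_0^1 \frac{\ln s}{1-s^2}\,\mathrm{d}s = -\frac{\pi^2}{8}$), and your computation checks out.
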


\begin{proof}
	The expectation of circumcurvature is
	\[
		\iint_\Delta \frac{4 \sqrt{s s_a s_b s_c}}{a b c} \psi(s_a,s_b) \dArea.
	\]
	Using the definition of $\psi$ from \eqref{eq:pushforward density 2} along with $s=1$, $s_c=s-s_a-s_b=1-s_a-s_b$, and $a=1-s_a$, $b=1-s_b$, and $c=1-s_c$, this simplifies as
	\begin{multline*}
		\iint_\Delta \frac{4}{2\pi(1-s_a)(1-s_b)(s_a+s_b)}\dArea \\
		= \int_0^1 \int_0^{1-s_a} \frac{2}{\pi(1-s_a)(1-s_b)(s_a+s_b)} \,\mathrm{d}s_b\,\mathrm{d}s_a = \frac{\pi}{2}.
	\end{multline*}
	
\end{proof}
It is great fun to compute the expectation of other natural quantities in triangle geometry, and we invite the reader to continue along these lines.

\section{Coordinates for $n$-gons}
We are now going to extend our picture to $n$-gons. We will start by generalizing our previous coordinates $x$, $y$, and $z$ for triangles. Remember that the vector $\vec{p} = (x,y,z)$ was the unit normal vector to the plane in $G_2(\R^3)$ defining the triangle, or the cross product of two orthonormal vectors $\vec{u}$, $\vec{v}$ giving a basis for the plane. Each coordinate of $\vec{p}$ is the determinant of a $2 \times 2$ matrix of coordinates $(\begin{smallmatrix}
u_i & v_i \\
u_j & v_j 
\end{smallmatrix})$
from $\vec{u}$ and $\vec{v}$. The length of $\vec{p}$ depends on $\vec{u}$ and $\vec{v}$, but $\vec{p}$ will always lie on the line normal to the plane. For that reason, it is useful to think of $\vec{p}$ as defined up to scalar multiplication\footnote{the scalar is determined by $\det A^T A = \nicefrac{1}{2} \sum \Delta_{ij}^2$, where $A$ is the $n \times 2$ matrix with columns $\vec{u}$, $\vec{v}$.}. In $\R^3$, there are precisely $\binom{3}{2} = 3$ such determinants, but in $\R^n$, there are $\binom{n}{2}$ such determinants. This leads you to construct
\begin{definition}
The \emph{Pl\"ucker coordinates} on $G_2(\R^n)$ associated to the plane $P$ spanned by $\vec{u}$ and $\vec{v}$ are the skew-symmetric matrix $\Delta(P)$ given by taking all the $2 \times 2$ minor determinants of the $n \times 2$ matrix $A = \left(\vec{u}\, \vec{v}\right)$ and identifying matrices which are scalar multiples of each other. This has several (immediately) equivalent forms: \label{def:plucker}
\begin{equation}\label{eq:plucker def}
\Delta(P)_{ij} = \det
\begin{pmatrix}
u_i & v_i \\
u_j & v_j 
\end{pmatrix}
= (u_i,v_i) \times (u_j,v_j) = 
\left( A 
\left(
\begin{smallmatrix}
0 & 1 \\
-1 & 0 \\
\end{smallmatrix}
\right)
A^T \right)_{ij}.
\end{equation} 
When it's clear which plane the coordinates refer to, we'll just write $\Delta$ and $\Delta_{ij}$.

\end{definition}
 We note in passing that the Pl\"ucker matrix is skew-symmetric. The matrix $\left( \begin{smallmatrix} 0 & 1 \\ -1 & 0 \end{smallmatrix} \right)$ should be familiar here; it represents multiplication by $-\I$ in the standard matrix representation of complex numbers. In context, it defines a complex structure $J$ on $P$ given by $J(\vec{u}) = -\vec{v}$ and $J(\vec{v}) = \vec{u}$, which is to say $J$ rotates $P$ by $90^\circ$ from $\vec{v}$ to $\vec{u}$. The Pl\"ucker matrix $\Delta(P)$ now has a clear geometric interpretation: as a linear map $\Delta(P)$ orthogonally projects each $\vec{x} \in \R^n$ to $P$ and then twists by the complex structure $J$.

The definition tells us how to find the Pl\"ucker matrix from the basis $\vec{u}$, $\vec{v}$, and this geometric interpretation (or the last expression in~\eqref{eq:plucker def}) tell us how to go back. Since $\Delta(P) \vec{u} = -\vec{v}$ and $\Delta(P) \vec{v} = \vec{u}$, the pairs $(\vec{u},-\vec{v})$ and $(\vec{v}, \vec{u})$ are singular vector pairs associated to the singular value $1$ for $\Delta(P)$. This means that the two left singular vectors or the two right singular vectors corresponding to the singular value 1 also give an orthonormal basis for the plane. Hence, we can recover an orthonormal basis for the plane from the Pl\"ucker coordinates by taking the SVD of the matrix $\Delta(P)$. Since the singular values are known in advance (two singular values are 1, the rest are 0), this is constructive and exact.

Last, we remark that while every Pl\"ucker matrix is skew-symmetric, the Pl\"ucker matrices are only a small subset of the skew-symmetric matrices: the Pl\"ucker coordinates obey an interesting system of Pl\"ucker relations which encode the fact that these subdeterminants are not all independent. The super-diagonal entries of the Pl\"ucker matrix define homogeneous coordinates of an embedding of $G_2(\R^n)$ into the projective space $\RP^{\binom{n}{2}-1}$. Since $\dim G_2(\R^n) = 2(n-2)$ is less than $\dim \RP^{\binom{n}{2}-1} = \binom{n}{2}-1$ for $n \geq 4$, we expect that these coordinates satisfy additional constraints. In fact, the constraints are simple: for each choice of four distinct rows $i<j<k<\ell$ from the matrix $A = (\vec{u}\,\vec{v})$, there are six Pl\"ucker coordinates $\Delta_{ij},\Delta_{ik},\Delta_{i\ell},\Delta_{jk},\Delta_{j\ell} , \Delta_{k\ell}$ coming from the six possible $2 \times 2$ minors involving the four rows. These six coordinates must satisfy the \emph{Pl\"ucker relation}
\begin{equation}\label{eq:plucker relation}
	\Delta_{ij}\Delta_{k\ell} - \Delta_{ik}\Delta_{j\ell} + \Delta_{i\ell}\Delta_{jk} = 0.
\end{equation}
The relations coming from all possible choices of four rows define a system of homogeneous quadratic equations which exactly cut out the image of the Grassmannian inside projective space. See~\cite{Kleiman:1972wb} for a beautifully clear discussion of these matters.

\section{From continuous symmetries to discrete symmetries}
We have now shown that $O(3)$ is a transitive group which acts on triangles and preserves the measure; generalizing, Definition~\ref{def:ngons} tells us that $O(n)$ does the same for $n$-gons. This is the start of a fascinating journey, as the structure of the orthogonal group is one of the most beautiful chapters in algebra. Every math student knows that there are only 5 Platonic solids; more advanced ones know that the relatively scarcity of these extraordinary shapes comes from the fact there are only a few finite subgroups of $O(3)$. However, in higher dimensions there are many more finite subgroups, and each of these yields an beautiful symmetry of polygon space. 

We focus on the hyperoctahedral group $B_n$, which is the subgroup of matrices in $O(n)$ of signed permutations of the coordinates $x_1, \dots, x_n$; it is the group of symmetries of the hypercube and of its dual, the cross-polytope or hyperoctahedron. These are the only matrices in $O(n)$ with integer coordinates: each such matrix can be written as the product of a diagonal matrix with entries $\pm 1$ and a permutation matrix. It will be most convenient to describe an arbitrary element $\beta \in B_n$ as a permutation of $(-n, \dots, -1,1, \dots, n)$ obeying the condition $\beta(-i) = -\beta(i)$. Our goal now is to describe the action of the hyperoctahedral group on polygon space.

The action of $B_n$ on a plane $P \in G_2(\R^n)$ permutes the rows of any basis $(\vec{u}, \vec{v})$ for $P$ and changes some of their signs. This action is never effective: reversing the sign of all rows yields the same plane, though this element is the only nontrivial stabilizer of a generic plane. The action descends to an action on polygon space. A generic polygon is stabilized by the $(\Z/2\Z)^n$ subgroup of $B_n$ of signed permutations $\beta$ where $\beta(i) = \pm i$ for all $i$. The quotient group of unsigned permutations $S_n = B_n/(\Z/2\Z)^n$ simply permutes the edges of the polygon. These facts prove that

\begin{proposition}
The symmetric measure on polygon space is invariant under permutations of the edges.
\end{proposition}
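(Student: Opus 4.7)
The plan is to track the invariance from the Grassmannian down to polygon space, exploiting the fact that $B_n \subset O(n)$ and the edge map $P \mapsto \bigl((u_i+v_i\I)^2\bigr)_i$ is equivariant in a controlled way. The key observation is already set up in the paragraphs preceding the statement: the quotient group $S_n = B_n/(\Z/2\Z)^n$ acts on polygon space by permuting edges, so it suffices to promote the $O(n)$-invariance of the uniform measure on $G_2(\R^n)$ through the pushforward.

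First I would recall that by Definition~\ref{def:ngons}, the symmetric measure $\mu$ on polygon space is the pushforward of the uniform measure on $G_2(\R^n)$ under the map $\Phi(P) = \bigl((u_i+v_i\I)^2\bigr)_{i=1}^n$, where $(\vec u,\vec v)$ is any orthonormal basis of $P$. The uniform probability measure on $G_2(\R^n)$ is by construction invariant under the left action of $O(n)$, and in particular under the subgroup $B_n$ of signed permutations. So for any $\beta \in B_n$, the measure $\mu$ is the pushforward of a $\beta$-invariant measure.

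Next I would check how $\Phi$ intertwines the $B_n$-action on the Grassmannian with the edge-permutation action on polygon space. Write $\beta \in B_n$ as $\beta(i) = \varepsilon_i\,\sigma(i)$ for a permutation $\sigma \in S_n$ and signs $\varepsilon_i \in \{\pm1\}$. If $(\vec u,\vec v)$ is an orthonormal basis of $P$, then the rows of the basis for $\beta \cdot P$ are given by $(u_{\sigma^{-1}(i)},v_{\sigma^{-1}(i)})$ scaled by $\varepsilon_{\sigma^{-1}(i)}$. Squaring kills the sign since $(\pm(u+v\I))^2 = (u+v\I)^2$, so the $i$th edge of $\Phi(\beta\cdot P)$ equals the $\sigma^{-1}(i)$th edge of $\Phi(P)$. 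In other words, $\Phi \circ \beta = \sigma \cdot \Phi$, where $\sigma$ acts on polygon space by permuting edges.

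Combining the two steps, for any measurable set $E$ of polygons and any $\sigma \in S_n$, lifting $\sigma$ to any $\beta \in B_n$ with $\beta(i) = \sigma(i)$ gives
\begin{equation*}
\mu(\sigma \cdot E) = \mu_{G_2(\R^n)}\bigl(\Phi^{-1}(\sigma\cdot E)\bigr) = \mu_{G_2(\R^n)}\bigl(\beta\cdot \Phi^{-1}(E)\bigr) = \mu_{G_2(\R^n)}\bigl(\Phi^{-1}(E)\bigr) = \mu(E),
\end{equation*}
using $B_n$-invariance in the third equality. This gives the claimed invariance under edge permutations. The only subtle point, which is where I would be careful, is verifying the equivariance $\Phi \circ \beta = \sigma \cdot \Phi$ cleanly despite the $2^n$-fold branching of $\Phi$ (different choices of basis or signs all collapse after squaring); the squaring map removing the sign ambiguity is exactly what makes the $(\Z/2\Z)^n$ kernel act trivially and leaves only the $S_n$ quotient acting on edges.
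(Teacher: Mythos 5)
Your proposal is correct and follows essentially the same route as the paper, which proves the proposition in the paragraph immediately preceding it: the $B_n \subset O(n)$ action preserves the uniform measure on $G_2(\R^n)$, the $(\Z/2\Z)^n$ sign subgroup acts trivially on polygons because squaring kills the signs, and the quotient $S_n$ permutes the edges. You have simply made the pushforward bookkeeping explicit; there is no gap.
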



We now want to explore some consequences of this invariance. 
It follows directly from the definitions that $\beta$ acts in a nice way on Pl\"ucker matrices:
\begin{proposition}
For any $\beta \in B_n$, \label{prop:plucker action} 
\begin{equation*}
\Delta(\beta P)_{ij} = \sgn \beta(i) \sgn \beta(j) \Delta(P)_{|\beta(i)|\,|\beta(j)|}.
\end{equation*}
\end{proposition}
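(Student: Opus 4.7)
The plan is to unfold the definition of the Plücker matrix and compute directly. Fix a basis $(\vec u,\vec v)$ for $P$ with associated $n \times 2$ matrix $A = (\vec u\,\vec v)$. The row-permutation-with-signs action of $\beta$ described just above the proposition produces a new basis matrix $A'$ for $\beta P$ in which the $i$-th row is $\sgn\beta(i)$ times the $|\beta(i)|$-th row of $A$, i.e.\ $A'_{i,1} = \sgn\beta(i)\, u_{|\beta(i)|}$ and $A'_{i,2} = \sgn\beta(i)\, v_{|\beta(i)|}$. Substituting into the $2 \times 2$ minor-determinant form of~\eqref{eq:plucker def} and pulling $\sgn\beta(i)$ and $\sgn\beta(j)$ out of the first and second rows by multilinearity of the determinant leaves the $2\times 2$ minor of $A$ on rows $|\beta(i)|$ and $|\beta(j)|$, which by definition is $\Delta(P)_{|\beta(i)|,|\beta(j)|}$. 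Multiplying the two sign factors back in gives the claimed identity.

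A more compact packaging of the same computation is to write the row action of $\beta$ as left multiplication by a signed permutation matrix $P_\beta$ and exploit the matrix form $\Delta(P) = A \bigl(\begin{smallmatrix}0 & 1 \\ -1 & 0\end{smallmatrix}\bigr) A^T$ from the last expression in~\eqref{eq:plucker def}. In one line this yields $\Delta(\beta P) = P_\beta \Delta(P) P_\beta^T$, and since conjugation by a signed permutation matrix simultaneously permutes the rows and columns of a matrix and picks up the product of the two associated row and column signs, reading off the $(i,j)$ entry reproduces the formula. This also makes manifest that the identity is consistent with changes of basis $A \mapsto AG$ for $G \in GL_2(\R)$, since such right multiplication commutes with the left action by $P_\beta$ and scales both sides of the formula by $\det G$.

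The principal obstacle is not mathematical content but bookkeeping: one must pin down the convention that row $i$ of $A'$ is drawn from row $|\beta(i)|$ of $A$ (rather than the reverse), since the opposite convention would produce $\beta^{-1}$ in place of $\beta$ on the right-hand side. A quick sanity check on a pure transposition $\beta = (1\,2)$ (the formula must negate $\Delta_{12}$, send $\Delta_{13}$ to $\Delta_{23}$, and fix $\Delta_{34}$) together with a single-coordinate sign flip $\beta(i) = -i$ (which must negate every $\Delta_{ij}$ involving that coordinate) confirms that the signs and indices line up as written.
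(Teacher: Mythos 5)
Your proof is correct, and it is precisely the direct unpacking of the definitions that the paper leaves implicit when it says the proposition ``follows directly from the definitions'': the minor-determinant computation (or equivalently the conjugation $\Delta(\beta P) = P_\beta \Delta(P) P_\beta^T$) is the intended argument. Your attention to the row-indexing convention and the sanity checks on a transposition and a sign flip are sensible additions but do not change the substance.
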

We now start describing subsets of polygon space:
\begin{definition} 
We say that a 2-plane $P$ with orthonormal basis $A = (\vec{u}\, \vec{v})$ is a \emph{semicircular lift} of a polygon if the directions of the vectors $(u_i,v_i)$ all lie on the semicircle (oriented counterclockwise) between $(u_1,v_1)$ and $-(u_1,v_1)$. \end{definition}
We might worry that this idea is not well-defined; after all, there are many orthonormal bases for the plane! But it is easy to see that changing bases just rigidly rotates the collection of vectors $(u_i,v_i)$, which preserves the property above. 

\section{Polygons and the positive Grassmannian} A subset of the Grassmannian which has attracted a lot of interest recently in string theory~\cite{ArkaniHamed:2016ey} is the \emph{positive Grassmannian} of planes with a basis for which $\Delta_{ij}(P) > 0 \iff i < j$. We note that any basis for a plane in the positive Grassmannian has all signs in the upper triangle agreeing, but that reversing the orientation of the plane (for instance) reverses the signs of all Pl\"ucker coordinates. Therefore, we might also see matrices with all negative signs in the upper triangle. (In this case we took the wrong basis.)

This subspace has a natural meaning in our terms:

\begin{proposition} 
The positive Grassmannian $G_2(\R^n)^+$ consists of planes with a basis which is a semicircular lift of a strictly convex polygon.
\label{prop:positive grassmannian is convex polygons}
\end{proposition}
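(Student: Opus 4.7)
The plan is to translate the positivity condition $\Delta_{ij}(P) > 0$ for $i<j$ into a geometric statement about the planar vectors $(u_k,v_k) \in \R^2$ and from there into a statement about the squared edges $e_k = (u_k + v_k\I)^2$. The key bridge is the second form in~\eqref{eq:plucker def}: writing $(u_k,v_k) = r_k(\cos\theta_k,\sin\theta_k)$, we have $\Delta_{ij} = (u_i,v_i) \times (u_j,v_j) = r_i r_j \sin(\theta_j - \theta_i)$. Provided no $(u_k,v_k)$ vanishes, $\Delta_{ij} > 0$ is equivalent to $\theta_j - \theta_i \pmod{2\pi}$ lying in $(0,\pi)$. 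On the polygon side, $\arg e_k = 2\theta_k \pmod{2\pi}$, so strict convexity of the polygon traversed in the order $e_1,\dots,e_n$ is equivalent to the doubled angles $2\theta_1,\dots,2\theta_n$ being strictly cyclically increasing and winding exactly once around the origin. Closure of the polygon is automatic from orthonormality of $\vec u,\vec v$.

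For the forward direction, suppose $(\vec u, \vec v)$ is a semicircular lift of a strictly convex polygon. Rotating $P$ inside itself (which only rotates the polygon), I may assume $\theta_1 = 0$ and $\theta_k \in [0,\pi)$ for all $k$. The squaring map sends the arc $[0,\pi)$ bijectively onto $[0,2\pi)$, so strict cyclic increase of the $2\theta_k$ pulls back to $0 = \theta_1 < \theta_2 < \dots < \theta_n < \pi$. Then for $i<j$, $\theta_j - \theta_i \in (0,\pi)$, so $\Delta_{ij} > 0$.

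For the converse, assume $\Delta_{ij} > 0$ whenever $i<j$. The inequalities $\Delta_{1j} > 0$ for $j>1$ say that $\theta_j - \theta_1 \in (0,\pi) \pmod{2\pi}$, so every $(u_j,v_j)$ with $j>1$ lies in the open semicircle counterclockwise from $(u_1,v_1)$: this is the semicircular-lift condition. Within this semicircle, any two angle differences $\theta_j - \theta_i$ lie in $(-\pi,\pi)$, so positivity of $\Delta_{ij}$ for $1<i<j$ sharpens to $\theta_i < \theta_j$. Doubling then produces a strictly monotone sequence $2\theta_1 < \dots < 2\theta_n$ that makes exactly one full loop around the circle, and combined with automatic closure this gives a strictly convex polygon.

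The main care required is the two-layer angle-doubling: the Pl\"ucker-sign condition naturally lives at the level of $\theta$, convexity lives at the level of $2\theta$, and the semicircular-lift hypothesis is precisely what lets the doubled angles fill the circle exactly once rather than twice. A small side issue is the degenerate locus: if some $(u_k,v_k)=0$ then the entire $k$-th row and column of $\Delta$ would vanish, and if two of these vectors were parallel then the corresponding $\Delta_{ij}$ would vanish; strict positivity of all super-diagonal $\Delta_{ij}$ rules out both pathologies, placing us in the open stratum where the sign arguments above are strict.
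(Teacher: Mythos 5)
Your proof is correct and is precisely the argument the paper intends: the paper leaves this as an exercise with the hint that strict convexity is equivalent to the edge directions being distinct and in counterclockwise cyclic order, and your writeup executes exactly that translation (cross-product positivity $\leftrightarrow$ angle ordering in a semicircle $\leftrightarrow$ ordering of the doubled edge directions), including the correct handling of the degenerate loci. The only implicit convention, shared with the paper, is that ``strictly convex'' is read as positively oriented, which is harmless since reversing the orientation of the basis of $P$ negates all Pl\"ucker coordinates and reflects the polygon.
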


The proof is a pleasant exercise in chasing down definitions, so we leave it to the reader with this hint: strict convexity of the polygon is equivalent to the statement that the edge directions are distinct and in counterclockwise order on the circle and a semicircular lift preserves this property. We note that a very similar interpretation of the positive Grassmannian for 2-planes which shows that the positive Grassmannian has the same topology as the convex polygons appears in Section 5.3 of~\cite{ArkaniHamed:2016ey}.

%
%

Since the property of being a convex polygon is invariant under cyclic permutations of the edges, we expect a cyclic subgroup of the hyperoctahedral group to preserve $G_2(\R^n)^+$. In fact, the full stabilizer is somewhat bigger, and the cyclic part is not quite what we'd expect:

\begin{proposition}
The stabilizer of $G_2(\R^n)^+$ inside the hyperoctahedral group is the subgroup of order $4n$ generated by \label{prop:stabilizer}
\begin{equation*}
\begin{aligned}
\beta  &= (1,2, \dots (n-1),-n)(-1,-2, \dots -(n-1),n), \\
\eta   &= (-1,1)(-2,2) \dots (-n,n), \\
\gamma &= (1,n)(2,n-1) \dots (-1,-n)(-2,-(n-1)) \dots
\end{aligned}
\end{equation*}
Note that the subgroup generated by $\beta$ is cyclic of order $n$, but \emph{not} the canonical cyclic subgroup of order $n$ generated by $(1,2, \dots, n)(-1,-2, \dots, -n)$.
\end{proposition}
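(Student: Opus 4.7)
The plan is to combine the geometric picture from \prop{positive grassmannian is convex polygons}---that $G_2(\R^n)^+$ consists of semicircular lifts of strictly convex $n$-gons---with the algebraic formula for the $B_n$-action on Pl\"ucker coordinates from \prop{plucker action}. The target order $4n$ factors as $4n=2\cdot 2n$: the $2n$ comes from the dihedral symmetry group of a cyclically labeled convex $n$-gon (cyclic rotations and reversals), and the extra factor of $2$ comes from the central element $\eta=-I_n$, which acts trivially on $G_2(\R^n)$ and so sits in the stabilizer of every subset.

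First I would verify that each of $\beta$, $\eta$, $\gamma$ stabilizes $G_2(\R^n)^+$. For $\eta=-I_n$ this is immediate. For $\beta$ and $\gamma$, I would plug into \prop{plucker action}: for $P\in G_2(\R^n)^+$ with all Pl\"ucker coordinates positive, that proposition gives
\[
\sgn\Delta(\beta'P)_{ij}=\sgn(\beta'(i))\,\sgn(\beta'(j))\,\sgn(|\beta'(j)|-|\beta'(i)|),
\]
and one checks directly from the cycle structures that for $\beta'\in\{\beta,\gamma\}$ this sign is constant in $i<j$. Geometrically this is the statement that cyclic rotation of the $n$ edges (with the wrap-around sign flip encoded by the $-n$ entry in $\beta$) and reversal of the edge order (encoded by $\gamma$) each send a semicircular lift of a convex polygon to another semicircular lift.

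I would then show $|\langle\beta,\eta,\gamma\rangle|=4n$ by direct computation of relations: $\beta$ has order $n$ from its cycle structure, $\gamma$ and $\eta$ have order $2$, and $\eta=-I_n$ is central. The key relation $\gamma\beta\gamma^{-1}=\beta^{-1}$ (up to a factor of $\eta$, which does not affect the size of the resulting group) makes $\langle\beta,\gamma\rangle$ a dihedral-type group of order $2n$; since $\eta$ lies outside this subgroup, adjoining it doubles the order to $4n$.

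The main obstacle is proving that no element of $B_n$ outside $\langle \beta,\eta,\gamma\rangle$ preserves $G_2(\R^n)^+$. For any $\beta'\in B_n$, the constancy-of-sign condition above is a combinatorial constraint on the signed permutation: splitting $(\beta'(1),\ldots,\beta'(n))$ into its positive and negative subsequences, both must be monotone in absolute value, with a compatible interleaving. I would argue this geometrically to avoid casework: $\beta'$ preserves $G_2^+$ iff it sends semicircular lifts to semicircular lifts, which forces $\beta'$ to act on the $n$ directions around the semicircle by a rigid symmetry of the cyclic configuration---a cyclic rotation or a reversal ($2n$ options)---optionally composed with the trivially acting $\eta$ (factor $2$). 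This yields exactly $4n$ signed permutations, and since each such symmetry is realized by a product of $\beta$, $\eta$, and $\gamma$, the stabilizer equals $\langle\beta,\eta,\gamma\rangle$.
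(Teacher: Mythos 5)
Your overall architecture matches the paper's: both halves rest on Proposition~\ref{prop:plucker action}, first checking that $\beta$, $\eta$, $\gamma$ preserve the sign pattern of the Pl\"ucker matrix (with $\eta$ acting trivially on the Grassmannian), and then arguing that nothing else does. Two things you do differently are worth noting. First, you explicitly compute the order of $\langle\beta,\eta,\gamma\rangle$ from relations, which the paper simply asserts; that is a worthwhile addition, but be careful with the claim $\gamma\beta\gamma^{-1}=\beta^{-1}$ ``up to a factor of $\eta$'' --- conjugating the printed $\beta$ by $\gamma$ produces $\beta^{-1}$ composed with a sign change that you must verify actually lies in $\langle\beta,\eta\rangle$, since the entire order count $4n$ hinges on $\langle\beta,\eta\rangle$ being normalized by $\gamma$. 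Second, your setup for maximality is in one respect more careful than the paper's: you correctly observe that the stabilizer condition is that $\sgn\beta'(i)\sgn\beta'(j)\sgn(|\beta'(j)|-|\beta'(i)|)$ be \emph{constant} over $i<j$ (allowing the all-negative case, which is exactly how $\gamma$ enters), whereas the paper's maximality argument normalizes only by powers of $\beta$ and $\eta$ and treats a single negative super-diagonal entry as an immediate contradiction.

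The genuine gap is in your final step. You write that preserving $G_2(\R^n)^+$ ``forces $\beta'$ to act on the $n$ directions around the semicircle by a rigid symmetry of the cyclic configuration,'' but that implication \emph{is} the theorem; stating it geometrically does not discharge it. A signed permutation acts on lifts of a \emph{particular} convex polygon by permuting and negating its $n$ lifted edge directions, and you must show that the only such operations sending the set of \emph{all} semicircular lifts of \emph{all} convex polygons back into itself are the $4n$ claimed ones --- the ``casework'' you hope to avoid is precisely this argument. The paper supplies the missing mechanism: compose an arbitrary stabilizing element with a power of $\beta$ (and $\eta$ if needed) so that the result $\pi$ fixes $1$, show $\pi(j)>0$ for all $j$ by examining $\Delta(\pi P)_{1j}$, and then induct on $K$ using the sign of $\Delta(\pi P)_{Kj}$ to force $\pi(K)=K$. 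You should either carry out that induction (taking care to also normalize away the orientation-reversing coset by $\gamma$, which the paper's write-up glosses over) or complete the combinatorial characterization you began --- that the positive and negative subsequences of $(\beta'(1),\dots,\beta'(n))$ must each be monotone with a compatible interleaving, and that exactly $4n$ signed permutations satisfy this. Until one of those is done, the containment ``stabilizer $\subseteq\langle\beta,\eta,\gamma\rangle$'' is asserted rather than proved.
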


\begin{proof}
We start by showing that all these group elements map $G_2(\R^n)^+$ to itself, using Proposition~\ref{prop:plucker action}. If $P$ is in $G_2(\R^n)^+$, then 
\begin{equation*}
\Delta(\beta P)_{ij} = \Delta(P)_{(i+1)(j+1)} > 0 \iff i < j \text{ and } i, j \neq n
\end{equation*}
while
\begin{equation*}
\Delta(\beta P)_{in} = -\Delta(P)_{(i+1)1} > 0, \text{ since } \Delta(P)_{1(i+1)} > 0,
\end{equation*}
so $\beta P$ is still in $G_2(R^n)^+$. The element $\eta$ doesn't change any Pl\"ucker coordinates:
\begin{equation*}
\Delta(\eta P)_{ij} = \sgn \eta(i) \sgn \eta(j) \Delta(P)_{ij} = (-1)(-1) \Delta(P)_{ij} = \Delta(P)_{ij}.
\end{equation*}
Thus $\eta P$ is actually the same plane! (And so it's definitely still in $G_2(\R^n)^+$.) Since $\gamma$ reverses the order of the coordinates, $i > j \iff \gamma(i) < \gamma(j)$. Further, $\Delta(P)_{ij} > 0 \iff i < j$. Thus,
\begin{equation*}
\Delta(\gamma P)_{ij} = \Delta(P)_{\gamma(i)\gamma(j)} > 0 \iff i > j.
\end{equation*}
Thus $\Delta(\gamma P)$ has positive entries \emph{below} the main diagonal and negative entries \emph{above} in some basis $(\vec{u}, \vec{v})$ for $P$. But the basis $(\vec{v}, \vec{u})$ for $P$ has opposite signs for all Pl\"ucker coordinates, and so has $\Delta_{ij} > 0 \iff i < j$, as desired. Thus $\gamma P \in G_2(\R^n)^+$.

It's fun to see this geometrically as well. Figure~\ref{fig:stabilizer} shows a convex 4-gon and its semicircular lift. If we cyclically permute the edges so that we start with edge 4, we must take the other square root of edge direction $4$ to keep the lifts of the other edges in the semicircle extending counterclockwise from $(u_4,v_4)$. This generalizes to $n$-gons.

We have now proved that our subgroup stabilizes $G_2(\R^n)^+$. But we haven't proved that it is the \emph{largest} subgroup of the hyperoctahedral group with this property. So take any hyperoctahedral group element $\delta$ which stabilizes $G_2(\R^n)^+$. We know $\delta$ must send $1$ to some $\pm k$. The product $\pi$ of $\delta$ and $\beta^{n-|k|}$ (and, if needed, $\eta$) fixes $1$. It suffices to show that $\pi$ is the identity; if so, $\delta$ was in the subgroup.

We start by proving that $\pi(j) > 0$ for all positive $j$. Suppose not. Then if $P$ is in the positive Grassmannian,
\begin{equation*}
\Delta(\pi P)_{1j} = \sgn \pi(1) \sgn \pi(j) \Delta P_{1|\pi(j)|} = -\Delta P_{1|\pi(j)|} < 0,
\end{equation*}
since $\Delta P_{1|\pi(j)|} > 0$ by our assumption that $P$ was in the positive Grassmannian.
Thus $\pi P$ is not in the positive Grassmannian, a contradiction. This means that $\pi$ consists of matching permutations of $1, \dots, n$ and $-1, \dots, -n$.

We are now going to prove by induction that $\pi(k) = k$ for all $k$. We have just established the base case ($k=1$). So suppose $\pi(k) = k$ for $k < K$, and consider $\pi(K)$. If $\pi(K) \neq K$, then $\pi(K) > K$ (since $1, \dots, K-1$ are taken), and there is some $j > K$ so that $\pi(j) = K$. But then
\begin{equation*}
\Delta(\pi P)_{Kj} = \Delta P_{\pi(K) K} < 0,
\end{equation*}
even though $\pi(K) > K$, and $\pi P$ is not in the positive Grassmannian, a contradiction. Thus $\pi(K) = K$, and we have proved that $\pi$ is the identity permutation. \end{proof}

\begin{figure}
\begin{center}
\hphantom{.}
\hfill
\begin{overpic}[width=1.5in]{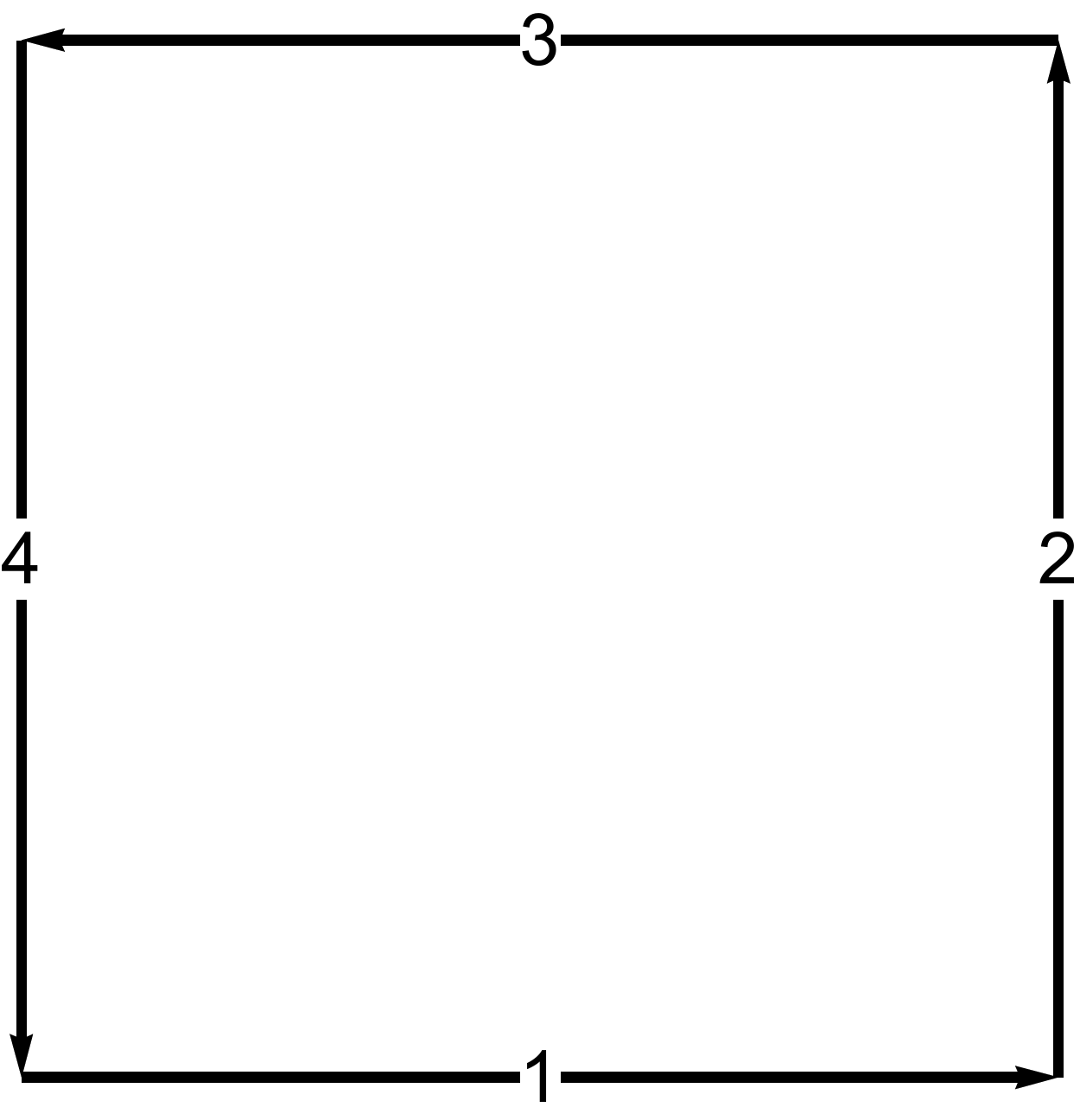}
\end{overpic}
\hfill
\begin{overpic}[width=1.5in]{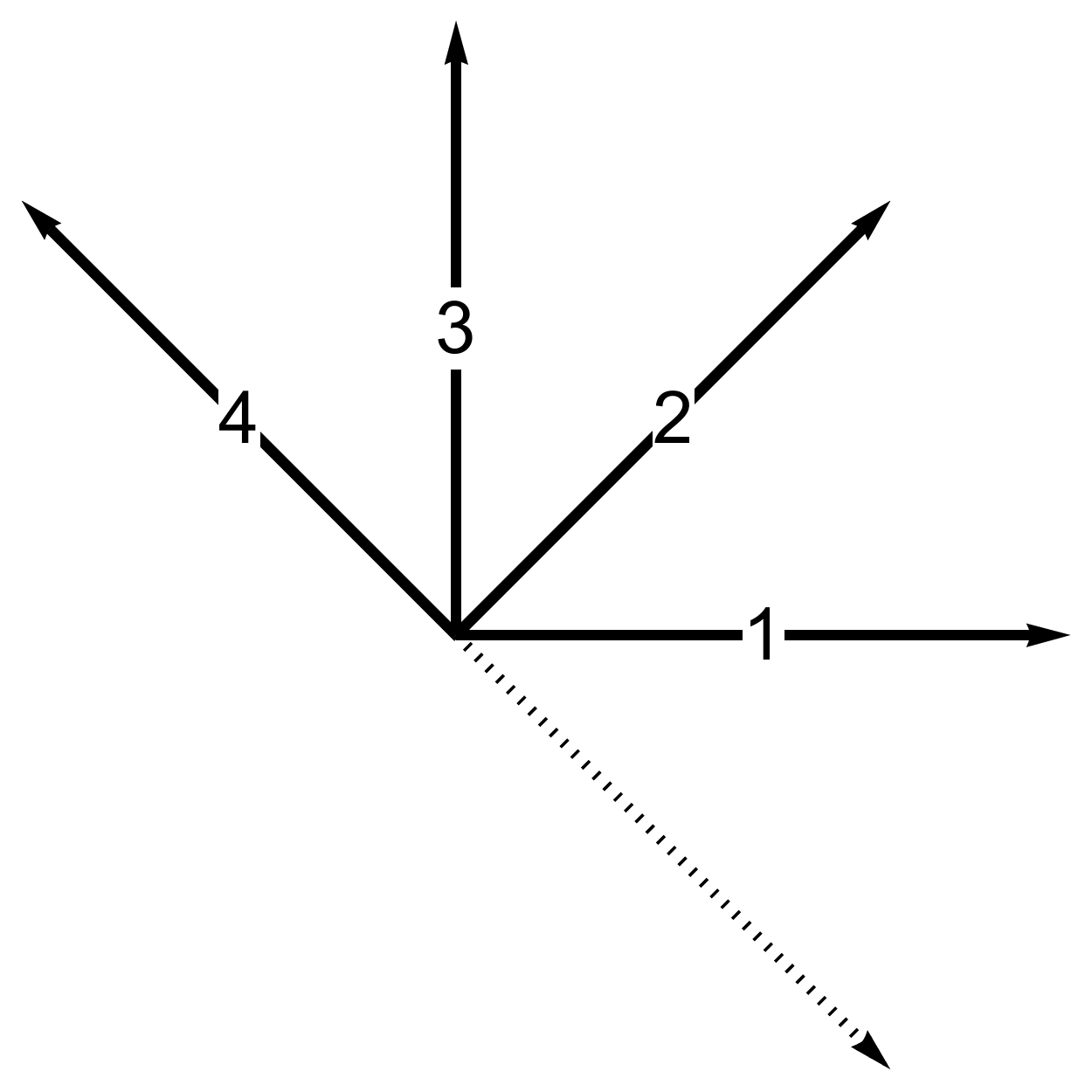}
\end{overpic}
\hfill
\hphantom{.}
\end{center}
\caption{A convex polygon in the positive Grassmannian $G_2(\R^n)^+$ (left) and its semicircular lift (right). If we cyclically permute the edges to put edge 4 first, we must take the opposite lift of edge 4 to make the lift semicircular.
\label{fig:stabilizer}}
\end{figure}

We can now subdivide the space of polygons in a natural way by dividing the Grassmannian into \emph{sign chambers} by grouping together all planes for which the matrix $S_{ij} = \sgn \Delta_{ij}$ of signs of the Pl\"ucker coordinates is the same. We will call these \emph{Pl\"ucker sign matrices}. By convention, the sign chambers will be the open subsets of the Grassmannian whose Pl\"ucker sign matrices have zeros only on the diagonal. The positive Grassmannian, for instance, is the sign chamber corresponding to the Pl\"ucker sign matrix $S^0$ defined by $S^0_{ij} = 1 \iff i < j$, $S^0_{ii} = 0$. 

Like the original Pl\"ucker matrices, Pl\"ucker sign matrices are skew-symmetric and defined up to scalar multiplication (by $\pm 1$). But not every skew-symmetric matrix of $\pm 1$'s is a Pl\"ucker sign matrix -- the Pl\"ucker relations~\eqref{eq:plucker relation} rule some out. This means that it is interesting to count the sign chambers and determine whether there are different types of sign chambers or whether they are all identical.  

\begin{proposition}
The \label{prop:transitive} action of the hyperoctahedral group on $G_2(\R^n)$ descends to a transitive hyperoctahedral group action on the sign chambers and the Pl\"ucker sign matrices. 
\end{proposition}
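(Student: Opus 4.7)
The plan is to exploit Proposition~\ref{prop:plucker action}, which expresses $\sgn \Delta(\beta P)_{ij}$ as $\sgn \beta(i) \sgn \beta(j) \sgn \Delta(P)_{|\beta(i)||\beta(j)|}$. This formula depends on $P$ only through its Plücker sign matrix, so two planes in the same sign chamber are carried by any fixed $\beta$ into a common sign chamber. Hence $B_n$ permutes the sign chambers and the action descends. The content of the proposition is thus transitivity, and for that it suffices to show that every sign chamber can be mapped onto the positive-Grassmannian chamber $S^0$.

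Given a plane $P$ in the interior of an arbitrary sign chamber, I would choose an orthonormal basis $A = (\vec u, \vec v)$ and view the rows $r_i = (u_i, v_i)$ as vectors in $\R^2$. Since every Plücker coordinate $\Delta_{ij} = r_i \times r_j$ is nonzero on the interior of a sign chamber, no $r_i$ vanishes and no two $r_i$, $r_j$ are parallel or antiparallel. The strategy is to produce a hyperoctahedral element $\sigma \eta$ that rearranges these rows into a semicircular lift of a strictly convex polygon, so that Proposition~\ref{prop:positive grassmannian is convex polygons} puts the image in $G_2(\R^n)^+$.

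The element is built in two steps. First, choose a line $\ell$ through the origin in $\R^2$ that is not parallel to any $r_i$ (only finitely many directions are forbidden), and let $\eta \in (\Z/2\Z)^n \subset B_n$ flip the sign of each row lying on the ``negative'' side of $\ell$; after $\eta$ every $r_i$ lies strictly on one side of $\ell$. Second, let $\sigma \in S_n \subset B_n$ be the permutation that sorts the rows into counterclockwise angular order within that half-plane. After applying $\sigma\eta$, the rows form a strictly counterclockwise semicircular configuration, and for $i < j$ the angular sweep from $r_i$ to $r_j$ lies in $(0, \pi)$, so $\Delta_{ij} = r_i \times r_j > 0$. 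Thus $(\sigma \eta) P$ has Plücker sign matrix exactly $S^0$, placing it in the positive-Grassmannian chamber.

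The main point to watch is orientation: since Plücker sign matrices are defined only up to an overall $\pm 1$, even if our final basis had the ``wrong'' orientation (which would negate every $\Delta_{ij}$), the resulting sign chamber would still be $S^0$. So no additional bookkeeping is needed beyond the two elementary row moves, and the real content of the argument is the geometric observation that sign-flipping rows moves them into a common half-plane and permuting rows sorts them by angle—exactly the two degrees of freedom encoded by $B_n = (\Z/2\Z)^n \rtimes S_n$ acting on rows.
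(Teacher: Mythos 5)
Your proposal is correct and follows essentially the same route as the paper: the action descends via Proposition~\ref{prop:plucker action}, and transitivity is obtained by using sign flips to move all rows $(u_i,v_i)$ into a common half-plane and a permutation to sort them by angle, landing in the positive Grassmannian by Proposition~\ref{prop:positive grassmannian is convex polygons}. The only cosmetic difference is that you pick a generic separating line $\ell$ while the paper anchors the semicircle at $(u_1,v_1)$; your explicit remark about the overall $\pm 1$ ambiguity of sign matrices is a nice extra precaution but changes nothing.
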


\begin{proof}
It follows immediately from Proposition~\ref{prop:plucker action} that the action of the hyperoctahedral group on the Grassmannian and the Pl\"ucker matrices induces a corresponding action on the set of Pl\"ucker sign matrices. So suppose we have an arbitrary Pl\"ucker sign matrix $S$ corresponding to some plane $P \in G_2(\R^n)$ with a basis $(\vec{u}$, $\vec{v})$, as usual. It suffices to show that there is a hyperoctahedral group element which maps $S$ to $S^0$.

Some collection of sign changes puts all the $(u_i,v_i) = u_i + v_i\I = r_i e^{\I \theta_i}$ in the semicircle extending counterclockwise from $(u_1,v_1) = (r_1,0)$. There is then a unique permutation of $2,\dots, n$ which fixes $(u_1,v_1)$ and puts the remaining $(u_i, v_i)$ in counterclockwise order by direction. Together, the permutation and sign changes are some element $\beta$ of the hyperoctahedral group. But now the basis $\vec{u}$, $\vec{v}$ is a semicircular lift of a convex polygon and by Proposition~\ref{prop:positive grassmannian is convex polygons}, the resulting plane is in the positive Grassmannian. Hence, it has Pl\"ucker sign matrix $S^0$.
\end{proof}

We can now count and describe the sign chambers easily:
\begin{proposition}
There are $2^{n-2} \times (n-1)!$ sign chambers and corresponding Pl\"ucker sign matrices. The sign chambers are all isometric and in particular have the same volume. 
\end{proposition}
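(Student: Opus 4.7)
The plan is a direct application of the orbit-stabilizer theorem, using the two preceding propositions as the key inputs. First I would recall that the hyperoctahedral group has order $|B_n| = 2^n \cdot n!$, since every element is uniquely a signed permutation (a permutation matrix composed with a diagonal sign matrix). By Proposition~\ref{prop:transitive}, $B_n$ acts transitively on the set of sign chambers, and by Proposition~\ref{prop:stabilizer}, the stabilizer of the positive Grassmannian $G_2(\R^n)^+$ (i.e.\ the chamber corresponding to $S^0$) has order $4n$. Since all chambers have conjugate stabilizers, the orbit-stabilizer theorem gives
\begin{equation*}
\#\{\text{sign chambers}\} = \frac{|B_n|}{4n} = \frac{2^n \cdot n!}{4n} = 2^{n-2}\,(n-1)!,
\end{equation*}
which is the claimed count.

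For the geometric part of the statement, I would observe that $B_n \subset O(n)$, and that $O(n)$ acts on $G_2(\R^n)$ by isometries of the canonical ($O(n)$-invariant) Riemannian metric used throughout the paper. Therefore, since $B_n$ acts transitively on the sign chambers by Proposition~\ref{prop:transitive}, any two sign chambers are related by an element of $B_n$, which is an isometry of $G_2(\R^n)$. Hence any two chambers are isometric as subsets of $G_2(\R^n)$; in particular they have the same volume with respect to the uniform (Haar) measure that defines the symmetric measure on polygon space.

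The only mild subtlety, which I would briefly flag, is making sure the counting is well-posed: we are counting \emph{sign chambers}, defined as the open subsets of $G_2(\R^n)$ on which the Pl\"ucker sign pattern $S_{ij} = \sgn\Delta_{ij}$ is constant (with zeros only on the diagonal), and simultaneously counting the distinct Pl\"ucker sign matrices achieved. The correspondence between these two counts is built into Proposition~\ref{prop:transitive}, which sets up a $B_n$-equivariant bijection between chambers and attained sign matrices, so the orbit-stabilizer count applies equally to both. I do not anticipate any real obstacle here, as every ingredient has been established; the proof is essentially one line of arithmetic plus the remark that the transitive group acts by isometries.
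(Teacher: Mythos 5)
Your proof is correct and follows essentially the same route as the paper's: the orbit-stabilizer theorem applied to the $B_n$-action, using the transitivity from Proposition~\ref{prop:transitive} and the order-$4n$ stabilizer from Proposition~\ref{prop:stabilizer}, giving $2^n n!/(4n) = 2^{n-2}(n-1)!$. Your added remark that $B_n \subset O(n)$ acts by isometries, so the chambers are isometric and equivolume, is exactly the justification the paper leaves implicit.
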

 
\begin{proof}
By the orbit-stabilizer theorem, the size of the orbit of $S^0$ is equal to the size of the hyperoctahedral group $B_n$ (namely $2^{n} \times n!$) divided by the size of the stabilizer ($4n$, by Proposition~\ref{prop:stabilizer}). But by Proposition~\ref{prop:transitive}, the orbit of $S^0$ is the entire set of Pl\"ucker sign matrices. 
\end{proof}

We now want to understand the geometric meaning of the sign chambers. 

\begin{theorem}
The convex $n$-gons consist of $2^{n-1}$ copies of the positive Grassmannian. They comprise $2/(n-1)!$ of the space of $n$-gons.
\label{thm:convex}
\end{theorem}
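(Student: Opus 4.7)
The plan is to identify the convex $n$-gons inside $G_2(\R^n)$ as the orbit of $G_2(\R^n)^+$ under the sign-change subgroup $(\Z/2\Z)^n\subset B_n$. Since an element $\sigma\in(\Z/2\Z)^n$ simply flips the signs of some $z_i=u_i+\I v_i$ in a chosen basis, and since $e_i=z_i^2=(-z_i)^2$, any such $\sigma$ preserves the underlying polygon even though it changes the plane. The only element acting trivially on $G_2(\R^n)$ is $\eta$ (which negates every $z_i$), so the effective sign-change group $(\Z/2\Z)^n/\langle\eta\rangle$ has order $2^{n-1}$. I expect the convex region of $G_2(\R^n)$ to be precisely the union of the $2^{n-1}$ images $\sigma\cdot G_2(\R^n)^+$.

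The first step is containment: every plane $P$ whose polygon is strictly convex lies in some $\sigma\cdot G_2(\R^n)^+$. Convexity means the edge directions $2\arg(z_i)$ are distinct and traverse the full circle exactly once in cyclic order, so the half-angles $\arg(z_i)$ are distinct and can collectively be placed in a single semicircle after an appropriate choice of signs $z_i\mapsto\pm z_i$. This sign choice yields a semicircular lift of the polygon, and \prop{positive grassmannian is convex polygons} then puts $\sigma\cdot P$ in $G_2(\R^n)^+$. Conversely, every plane in any image $\sigma\cdot G_2(\R^n)^+$ represents a strictly convex polygon, since $\sigma$ does not alter the polygon shape.

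The second step, and the step I expect to demand the most care, is disjointness of the $2^{n-1}$ images. Applying \prop{plucker action} to $\sigma\in(\Z/2\Z)^n$ gives $\Delta(\sigma P)_{ij}=\sgn\sigma(i)\sgn\sigma(j)\Delta(P)_{ij}$, so starting from the positive pattern $S^0_{ij}=1$ for $i<j$, the image $\sigma\cdot G_2(\R^n)^+$ sits in the sign chamber with Pl\"ucker sign matrix $\sgn\sigma(i)\sgn\sigma(j)S^0_{ij}$. Two cosets $\sigma,\sigma'\in(\Z/2\Z)^n/\langle\eta\rangle$ produce the same pattern only if $\sgn\sigma(i)\sgn\sigma'(i)$ is constant in $i$, equivalently only if $\sigma\equiv\sigma'\pmod{\eta}$. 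Hence the orbits lie in pairwise distinct sign chambers and are therefore disjoint.

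Finally, I would count volumes. Since $B_n\subset O(n)$ acts on $G_2(\R^n)$ by isometries, the $2^{n-1}$ images of $G_2(\R^n)^+$ all have the same volume. By the preceding proposition the total number of sign chambers is $2^{n-2}(n-1)!$, all of equal volume, so $G_2(\R^n)^+$ itself occupies $1/[2^{n-2}(n-1)!]$ of polygon space, and the $2^{n-1}$ copies account for a fraction $2^{n-1}/[2^{n-2}(n-1)!]=2/(n-1)!$, as claimed.
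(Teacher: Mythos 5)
Your proof is correct and takes essentially the same route as the paper: identify the convex polygons as the $2^{n-1}$ sign-change images (lifts) of the positive Grassmannian $G_2(\R^n)^+$, then divide by the count $2^{n-2}(n-1)!$ of equal-volume sign chambers. The paper's own proof is terser---it asserts the $2^{n-1}$ lifts without checking that they are disjoint---so your explicit verification that the images land in pairwise distinct Pl\"ucker sign chambers fills in a detail the paper leaves implicit rather than departing from its argument.
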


\begin{proof}
We already know from Proposition~\ref{prop:positive grassmannian is convex polygons}
that the positive Grassmannian consists of the semicircular lifts of the convex polygons. Therefore, any other plane corresponding to a convex polygon must be a different lift to $G_2(\R^n)$. There are $2^{n-1}$ such lifts, remembering that changing all the signs has no effect.
\end{proof}

\section{Sylvester's 4-point problem and quadrilaterals} In the same ongoing discussion in which Woolhouse posed his versions of the obtuse triangle problem, J.~J.\  Sylvester in 1864 asked for the probability that four points ``taken at random in a plane'' formed the vertices of a reentrant (embedded, but not convex) quadrilateral~\cite{Sylvester:1864vj}.\footnote{Note that there is a typo in the original statement of Sylvester's problem: he used the word ``convex'' where he meant to say ``reentrant''.} Various solutions were proposed by Cayley~\cite[footnote 64(b)]{Sylvester:1864cz}, De Morgan~\cite[pp. 147--148]{DeMorgan:1871vv}, and others, with answers including (at least) $1/4$, $35/12\pi^2$, $3/8$, $1/3$, and $1/2$~\cite{Ingleby:1866ul}. As with Caroll's problem, it soon became clear that the probability measure for the four points was an issue, with Sylvester concluding that the triangle problem and the quadrilateral problem, as posed, ``do not admit of a determinate solution''~\cite{Sylvester:1866vu}. A robust literature has grown up around the related problem of finding the probability when the points are selected from the interior of a convex body (see in particular Blaschke's remarkable result~\cite{Blaschke:1917tm} and Pfiefer's survey~\cite{Pfiefer:1989ha}). 

From our perspective, the most compelling of the original solutions to Sylvester's problem was given by the science educator, astronomer, future priest, and past Senior Wrangler James Maurice Wilson~\cite{Wilson:1866vj}, who argued that $1/3$ of quadrilaterals are reentrant by focusing on the edges of the quadrilateral rather than the vertices. Indeed, an extrapolation of his argument suggests that $1/3$ of quadrilaterals should be convex, $1/3$ reentrant, and $1/3$ self-intersecting, the same answer we will arrive at in Theorem~\ref{thm:quadrilateral classes}. 
 
We now answer Sylvester's question in our terms. We divide quadrilaterals into three classes: convex, reflex (or reentrant), and self-intersecting. We have shown (Theorem~\ref{thm:convex}) that 1/3 of the quadrilaterals are convex; the remaining 4-gons are either reflex or self-intersecting.  The boundaries between the classes consist of polygons where two edges point in the same (or opposite) directions; that is, when rows of the $n \times 2$ matrix $A$ are colinear or perpendicular. The Pl\"ucker matrix consists of cross products of these rows, and detects colinearity; we now add the matrix of dot products of rows to detect perpendicularity:

\begin{definition}
The \emph{projection matrix} associated to a plane $P$ with (orthonormal) basis given by the $n \times 2$ matrix $A$ is given by $AA^T$. This matrix orthogonally projects vectors to the plane $P$. The entries of $AA^T$ are the dot products of the rows of $A$; $(AA^T)_{ij} = (u_i,v_i) \cdot (u_j,v_j)$.
\end{definition}

It is a neat fact that the projection matrix is closely related to the Pl\"ucker matrix!

\begin{proposition}\label{prop:projection matrix plucker square}
If $A$ is an orthogonal $n \times 2$ matrix, the projection matrix 
\begin{equation*}
AA^T = -(\Delta(P))^2.
\end{equation*}
\end{proposition}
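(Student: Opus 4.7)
The statement is essentially a direct algebraic consequence of the definition of $\Delta(P)$ given in the last expression of~\eqref{eq:plucker def}, combined with orthonormality of the columns of $A$. My plan is to write $\Delta(P)$ in the compact matrix form $\Delta(P) = A J A^T$, where $J = \left(\begin{smallmatrix} 0 & 1 \\ -1 & 0 \end{smallmatrix}\right)$, and then simply square this expression, using the two key facts $J^2 = -I_2$ and $A^T A = I_2$.

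More specifically, I would first remind the reader that orthonormality of $\vec{u}, \vec{v}$ is exactly the statement $A^T A = I_2$, and that $J$ (representing multiplication by $-\mathbf{i}$) satisfies $J^2 = -I_2$. Then the computation is the one-liner
\begin{equation*}
\Delta(P)^2 \;=\; (A J A^T)(A J A^T) \;=\; A J (A^T A) J A^T \;=\; A J^2 A^T \;=\; -A A^T,
\end{equation*}
so that $A A^T = -\Delta(P)^2$, as claimed.

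There is no real obstacle here; the only thing to be careful about is that associativity of matrix multiplication is being used to regroup the $A^T A$ in the middle, and that the identity $A^T A = I_2$ crucially requires $A$ to have \emph{orthonormal} columns (not merely orthogonal ones), which matches the hypothesis. I would also mention the geometric content, which dovetails with the earlier description of $\Delta(P)$ as ``project to $P$, then rotate by $J$'': squaring this operation rotates by $J^2 = -I$, giving the negative of the projection $AA^T$ onto $P$. This makes the identity essentially obvious without any calculation, and I would include this sentence as an intuitive check.
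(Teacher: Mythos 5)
Your proof is correct and follows essentially the same route as the paper's: writing $\Delta(P) = A J A^T$, squaring, and using $J^2 = -I_2$ together with $A^T A = I_2$, then adding the same geometric sanity check (project, rotate by $180^\circ$, negate). No differences worth noting.
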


\begin{proof}
We noted in Definition~\ref{def:plucker} that $\Delta(P) = A \left( \begin{smallmatrix} 0 & 1 \\ -1 & 0 \end{smallmatrix} \right) A^T$. Expanding $\Delta(P)^2$:
\begin{equation*}
\Delta(P)^2 = A \left( \begin{smallmatrix} 0 & 1 \\ -1 & 0 \end{smallmatrix} \right) A^T A \left( \begin{smallmatrix} 0 & 1 \\ -1 & 0 \end{smallmatrix} \right) A^T = A \left( \begin{smallmatrix} 0 & 1 \\ -1 & 0 \end{smallmatrix} \right)^2 A^T = A(-I)A^T = - AA^T,
\end{equation*}
using the fact that Gramian $A^T A = I$ since the columns of $A$ are orthonormal. 

Geometrically this is similarly clear: $-\Delta(P)^2$ has the effect of projecting a vector to $P$, rotating it by $180^\circ$, and then reversing its direction. Since the last two actions cancel each other, this is just projection to $P$.
\end{proof}

We can use the Pl\"ucker sign matrix $\sgn \Delta(P)$ and the \emph{projection sign matrix} $\sgn AA^T$ to divide the Grassmannian into natural cells. We will call these ``sign cells'' for now. As before, the hyperoctahedral group $B_n$ acts on the matrices $AA^T$ and $\sgn AA^T$ just as it did on $\Delta(P)$ and $\sgn \Delta(P)$, and we will use this group action to do our computations. 

\begin{definition}
The Grassmannian is divided into a collection of cells, called \emph{sign cells} where each $P$ belongs to the subspace of all planes with the same matrices $\sgn \Delta(P)$ of signs of Pl\"ucker coordinates and $\sgn AA^T$ of signs of entries in the projection matrix for $P$. 
\end{definition}

We will now specialize to $G_2(\R^4)$ and prove some useful facts about the sign cells:
\begin{proposition}
\label{prop: sign cells}
\begin{itemize}
\item The positive Grassmannian $G_2(\R^4)^+$ is divided into 4 sign cells. 

\item The stabilizer of the positive Grassmannian from Proposition~\ref{prop:stabilizer} acts transitively on these 4 cells; hence the hyperoctahedral group acts transitively on the sign cells of $G_2(\R^4)$. 

\item The stabilizer of the ``base'' sign cell 
\begin{equation*}
\sgn \Delta = \begin{pmatrix}
 0 &  1 &  1 & 1 \\
-1 &  0 &  1 & 1 \\
-1 & -1 &  0 & 1 \\
-1 & -1 & -1 & 0 
\end{pmatrix} 
\quad
\sgn AA^T = 
\begin{pmatrix}
 1 &  1 &  1 & -1 \\
 1 &  1 &  1 & 1 \\
 1 &  1 &  1 & 1 \\
-1 &  1 &  1 & 1 
\end{pmatrix}
\end{equation*}
consists only of the 4 element group generated by 
\begin{equation*}
\begin{aligned}
\eta &= (-1,1)(-2,2) \dots (-n,n)\\
\gamma &= (1,n)(2,n-1) \dots (-1,-n), (-2,-(n-1)), \dots.
\end{aligned}
\end{equation*} 

\item There are $384 = 2^4 \times 4!$ elements of the hyperoctahedral group $B_4$ and hence $96 = 384/4$ different sign cells.

\item Each sign cell is equiprobable (in fact, each is isometric).
\end{itemize}
\end{proposition}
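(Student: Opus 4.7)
The plan is to handle the five bullets in order, with the main engine being $AA^T = -\Delta^2$ from Proposition~\ref{prop:projection matrix plucker square} together with the hyperoctahedral action formula from Proposition~\ref{prop:plucker action}.

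For the first bullet I will expand each off-diagonal entry $(AA^T)_{ij} = -\sum_k \Delta_{ik}\Delta_{kj}$ using the skew-symmetry of $\Delta$. Because every $\Delta_{k\ell}$ with $k<\ell$ is positive on $G_2(\R^4)^+$, I expect the three consecutive entries $(AA^T)_{12}, (AA^T)_{23}, (AA^T)_{34}$ to simplify to sums of two positive products of Pl\"ucker entries (hence always positive), the opposite entry $(AA^T)_{14}$ to simplify to the negative of such a sum (hence always negative), and the two skew entries $(AA^T)_{13}, (AA^T)_{24}$ to simplify to differences of two positive products whose signs are unconstrained. That caps the count at $2^2=4$; to realize all four combinations I will start from a symmetric solution of the Pl\"ucker relation $\Delta_{12}\Delta_{34} - \Delta_{13}\Delta_{24} + \Delta_{14}\Delta_{23} = 0$ in which both differences vanish, and then perturb two of the positive Pl\"ucker entries independently to push each difference to the desired sign while preserving positivity and the relation.

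For the second and third bullets together, I will feed the generators $\beta, \gamma$ of Proposition~\ref{prop:stabilizer} into the action formula and watch only the two free entries $(AA^T)_{13},(AA^T)_{24}$. I expect $\gamma$, which reverses the index order with no sign flips, to fix the displayed base pattern pointwise because both matrices in the statement are symmetric under the involution $(i,j) \mapsto (n+1-i,\, n+1-j)$ on index pairs; with $\eta$ acting trivially on $G_2(\R^n)$ this gives $\{e,\eta,\gamma,\eta\gamma\}$ as a $4$-element subgroup of the base-cell stabilizer. The generator $\beta$, by contrast, should move the base cell to a different cell; since the full stabilizer of $G_2(\R^4)^+$ has order $4n = 16$ by Proposition~\ref{prop:stabilizer}, orbit–stabilizer then forces the orbit of the base cell to have size exactly $16/4 = 4$, which simultaneously delivers transitivity on the four cells and pins the base-cell stabilizer down to the exhibited $4$-element subgroup.

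The fourth bullet is a second application of orbit–stabilizer, now on all of $B_4$: transitivity on sign chambers from Proposition~\ref{prop:transitive}, combined with transitivity on sign cells within a chamber from the previous bullet, gives transitivity on the full set of sign cells, so the orbit has size $|B_4|/4 = 384/4 = 96$. The fifth bullet is immediate from the inclusion $B_n \subset O(n)$: every hyperoctahedral element acts by an isometry of the canonical $O(n)$-invariant metric on $G_2(\R^n)$, so transitivity exhibits each cell as the isometric image of the base and hence forces equal volume under the uniform probability measure. The step I expect to be most delicate is verifying in the first bullet that the Pl\"ucker relation does not secretly correlate the two signs $\sgn(AA^T)_{13}$ and $\sgn(AA^T)_{24}$; the relation constrains only the product $\Delta_{13}\Delta_{24}$, leaving enough freedom in the remaining four positive Pl\"ucker entries to drive the differences $\Delta_{14}\Delta_{34} - \Delta_{12}\Delta_{23}$ and $\Delta_{12}\Delta_{14} - \Delta_{23}\Delta_{34}$ independently, but I will confirm this either by exhibiting an explicit witness in each of the four sign quadrants or by passing to the semicircular-lift parametrization of Proposition~\ref{prop:positive grassmannian is convex polygons}, in which the two skew entries become $r_1 r_3 \cos(\theta_3 - \theta_1)$ and $r_2 r_4 \cos(\theta_4 - \theta_2)$ and the sign independence is geometrically obvious.
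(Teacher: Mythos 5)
Your proposal follows essentially the same route as the paper: compute the super-diagonal of $AA^T=-\Delta^2$ in Pl\"ucker coordinates, observe that on $G_2(\R^4)^+$ only $(AA^T)_{13}$ and $(AA^T)_{24}$ have unconstrained signs (capping the count at $4$), and then run orbit--stabilizer twice. Two remarks. First, the inference ``$\beta$ moves the base cell, so orbit--stabilizer forces orbit size $16/4=4$'' is not yet complete: knowing only that the base-cell stabilizer contains the Klein four-group $\{e,\eta,\gamma,\eta\gamma\}$ and excludes $\beta$ still allows a stabilizer of order $8$ (containing $\beta^2$) and an orbit of size $2$. The computation you already commit to doing closes this: $\beta$ sends the pair $\bigl((AA^T)_{13},(AA^T)_{24}\bigr)$ to $\bigl((AA^T)_{24},-(AA^T)_{13}\bigr)$, an order-$4$ rotation of the sign pair, so $\langle\beta\rangle$ acts freely and transitively on all four sign combinations --- this is exactly what the paper verifies, and it also renders your explicit witnesses for the four combinations unnecessary, since nonemptiness of all four cells falls out of applying $\beta,\beta^2,\beta^3$ to any one generic point (though the semicircular-lift check with $(AA^T)_{13}=r_1r_3\cos(\theta_3-\theta_1)$ is a pleasant independent confirmation). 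Second, to identify the stabilizer of the base cell inside all of $B_4$, not merely inside the order-$16$ stabilizer of $G_2(\R^4)^+$, you need the one-line observation that any element preserving the base cell preserves its Pl\"ucker sign matrix $S^0$ and hence preserves the entire positive chamber; the paper states this explicitly, and your bullet-four count $384/4=96$ silently depends on it.
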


\begin{proof}
	Using Proposition~\ref{prop:projection matrix plucker square} and the fact that $\Delta_{ij} = -\Delta_{ji}$, we can write the projection matrix $AA^T$ in terms of the Pl\"ucker coordinates. Since $AA^T$ is symmetric and since the diagonal entries are simply the squared norms of the rows of $\Delta$ and hence positive, the projection sign matrix is completely determined by the super-diagonal triangle of $AA^T$, which is
	\begin{equation}\label{eq:projection matrix formula}
		\begin{pmatrix}
		\ast & \Delta_{13} \Delta_{23} + \Delta_{14} \Delta_{24} & \Delta_{14} \Delta_{34} - \Delta_{12} \Delta_{23} & -\Delta_{12} \Delta_{24} - \Delta_{13} \Delta_{34} \\
		 \ast & \ast & \Delta_{12} \Delta_{13}+\Delta_{24} \Delta_{34} & \Delta_{12} \Delta_{14}-\Delta_{23} \Delta_{34} \\
		 \ast & \ast & \ast & \Delta_{13} \Delta_{14}+\Delta_{23} \Delta_{24} \\
		 \ast & \ast & \ast & \ast 
		\end{pmatrix}
	\end{equation}
(the elements replaced with $\ast$ don't affect our analysis). For any element of the positive Grassmannian we know that $\Delta_{ij} > 0$ for all $i < j$, so the only entries in $AA^T$ whose signs could disagree with the base sign cell are $(AA^T)_{13}$ and $(AA^T)_{24}$. Since there are only 4 possible sign combinations for these two entries, we see that there are no more than 4 sign cells in the positive Grassmannian. 
	
	Now, we examine the action of the stabilizer of the positive Grassmannian. As we saw in Proposition~\ref{prop:stabilizer}, the element $\eta$ does not affect the Pl\"ucker coordinates, and hence must also fix the projection sign matrix. By inspection of the above expression for $AA^T$, the element $\gamma$ reflects the entries of the projection matrix across the anti-diagonal, which also has no effect on the projection sign matrix of elements of the base sign cell.
	
	The action of $\beta$ is more complicated, though straightforward enough to write down using our expression for $AA^T$. Among other things, $\beta$ replaces $(AA^T)_{13}$ with $(AA^T)_{24}$ and replaces $(AA^T)_{24}$ with $-(AA^T)_{13}$. When applied to the base sign cell, this turns out to be the only way in which $\beta$ affects the projection sign matrix. In turn, this means that $\beta^2$ replaces $(AA^T)_{13}$ with $-(AA^T)_{13}$ and replaces $(AA^T)_{24}$ with $-(AA^T)_{24}$, and that $\beta^3$ replaces $(AA^T)_{13}$ with $-(AA^T)_{24}$ and replaces $(AA^T)_{24}$ with $(AA^T)_{13}$. In particular, all 4 possible sign cells are actually realized and the cyclic subgroup generated by $\beta$ acts freely and transitively on them.
	
	Since the stabilizer of the base sign cell must be contained in the stabilizer of the entire positive Grassmannian, we have shown that the stabilizer is exactly the subgroup generated by $\eta$ and $\gamma$. The count of sign cells and the proof that they are isometric now go as they did in the proof of Proposition~\ref{prop:transitive}.
\end{proof}

We can now show:

\begin{proposition}
\label{prop: sign cells are path connected} The sign cells in $G_2(\R^4)$ are path-connected.
\end{proposition}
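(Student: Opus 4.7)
The plan is to reduce to a single sign cell via the hyperoctahedral symmetry of Proposition~\ref{prop: sign cells}, and then parametrize that cell explicitly so that path-connectedness is manifest.

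Since the hyperoctahedral group $B_4$ acts transitively on the $96$ sign cells by isometries, it suffices to prove the base sign cell is path-connected. That base cell lies in the positive Grassmannian $G_2(\R^4)^+$, which I would parametrize using Plücker coordinates: six positive numbers $\Delta_{ij}$ defined up to overall positive scaling and subject to the single relation $\Delta_{12}\Delta_{34} - \Delta_{13}\Delta_{24} + \Delta_{14}\Delta_{23} = 0$. Since $\Delta_{12} > 0$ everywhere on $G_2(\R^4)^+$, we may normalize $\Delta_{12} = 1$; the Plücker relation then determines $\Delta_{24} = (\Delta_{34} + \Delta_{14}\Delta_{23})/\Delta_{13}$ as a positive number, giving a bijection between $G_2(\R^4)^+$ and $\R^4_{>0}$ with coordinates $(\Delta_{13}, \Delta_{14}, \Delta_{23}, \Delta_{34})$.

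Next I would read off the sign conditions from~\eqref{eq:projection matrix formula}: four of the six off-diagonal entries of $AA^T$ are automatically positive on $G_2(\R^4)^+$ (being sums of products of positive Plücker coordinates) and $(AA^T)_{14}$ is automatically negative, matching the base sign matrix. The only two substantive conditions are $(AA^T)_{13} = \Delta_{14}\Delta_{34} - \Delta_{23} > 0$ and $(AA^T)_{24} = \Delta_{14} - \Delta_{23}\Delta_{34} > 0$. Crucially these involve only $\Delta_{14}, \Delta_{23}, \Delta_{34}$, so in our coordinates the base sign cell has the product form $S' \times \R_{>0}$, where the $\R_{>0}$ factor is the free $\Delta_{13}$-axis and $S' = \{(\Delta_{14}, \Delta_{23}, \Delta_{34}) \in \R^3_{>0} : \Delta_{23} < \Delta_{14}\min(\Delta_{34}, 1/\Delta_{34})\}$.

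To finish I would observe that $S'$ fibers over the convex base $\R^2_{>0} = \{(\Delta_{14}, \Delta_{34})\}$ with each fiber a nonempty open interval in $\Delta_{23}$. Any two points can be joined by linearly interpolating their $(\Delta_{14}, \Delta_{34})$ coordinates while setting $\Delta_{23}(t) = \tfrac{1}{2}\Delta_{14}(t)\min(\Delta_{34}(t), 1/\Delta_{34}(t))$, then connecting each endpoint to the interpolation along its $\Delta_{23}$-fiber. This shows $S'$, hence the base sign cell, hence every sign cell, is path-connected. The only bookkeeping step requiring care is checking that the Plücker-coordinate parametrization is indeed a bijection with $G_2(\R^4)^+$ and verifying the automatic signs in \eqref{eq:projection matrix formula}; neither step is deep, and I do not see any genuine obstacle to the argument.
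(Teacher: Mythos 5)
Your argument is correct, and it shares the paper's setup --- reduce to the base sign cell by the transitive isometric action of $B_4$, pass to the six Pl\"ucker coordinates normalized to be positive, and observe via \eqref{eq:projection matrix formula} that only $(AA^T)_{13} = \Delta_{14}\Delta_{34}-\Delta_{12}\Delta_{23}$ and $(AA^T)_{24} = \Delta_{12}\Delta_{14}-\Delta_{23}\Delta_{34}$ impose genuine constraints --- but the connecting step is genuinely different. The paper joins two \emph{given} points of the cell by logarithmic interpolation of the Pl\"ucker coordinates, $\Delta_{ij}(t) = (\Delta_{ij}^0)^{1-t}(\Delta_{ij}^1)^t$ (solving for $\Delta_{24}(t)$ from the Pl\"ucker relation), and then invokes the fact that logarithmic interpolation of numerator and denominator logarithmically interpolates a quotient, so the two inequalities persist for intermediate $t$. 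You instead normalize $\Delta_{12}=1$, solve the single relation for $\Delta_{24}$, and exhibit the whole cell globally as $S' \times \R_{>0}$ with $S'$ fibered over the convex base $\{(\Delta_{14},\Delta_{34})\}$ with nonempty open interval fibers in $\Delta_{23}$; paths are then routed through the continuous mid-fiber section. Your route buys more: it shows the base cell is homeomorphic to $\R^4_{>0}$, hence contractible, not merely path-connected, at the modest cost of verifying that the normalized Pl\"ucker chart is a homeomorphism onto the locus $\Delta_{12}\neq 0$ (standard, since the single relation exactly cuts out $G_2(\R^4)$ in $\RP^5$ and the Pl\"ucker map is an embedding). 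The paper's two-point interpolation avoids any global chart and is the more canonical construction, but both methods lean equally on the special feature of $n=4$ that there is only one Pl\"ucker relation to solve, so neither obviously settles the conjectured path-connectedness of sign cells in $G_2(\R^n)$ for $n>4$.
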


\begin{proof}
By Proposition~\ref{prop: sign cells}, it suffices to prove this for the base sign cell since all the sign cells are isometric. So suppose we have two planes $P_0$ and $P_1$ in the base sign cell, with Pl\"ucker coordinates $\Delta_{ij}^0$ and $\Delta_{ij}^1$. Keeping in mind that $\Delta_{ij} = -\Delta_{ji}$, we can restrict our attention to the six coordinates $\Delta_{ij}$ with $i < j$ for the remainder of the proof.  (The complementary Pl\"ucker coordinates follow a similar argument with various signs and inequalities reversed, which we will not write out.) These are all positive numbers which must obey the single Pl\"ucker relation
\begin{equation}
\Delta_{12} \Delta_{34} - \Delta_{13} \Delta_{24} + \Delta_{14} \Delta_{23} = 0.
\label{eq: plucker for quadrilaterals}
\end{equation}
Our strategy is to interpolate between $P_0$ and $P_1$. While there is certainly a geodesic path joining $P_0$ and $P_1$ which would be a natural candidate for interpolation, that path does not seem to always stay within the sign cell. Thus, we will join $P_0$ and $P_1$ by interpolating between $\Delta_{ij}^0$ and $\Delta_{ij}^1$ by a family of Pl\"ucker coordinates $\Delta_{ij}(t)$. 

We define the $\Delta_{ij}(t)$ by the logarithmic interpolation\footnote{So called because $\ln \Delta_{ij}(t)$ interpolates linearly between $\ln \Delta_{ij}^0$ and $\ln \Delta_{ij}^1$.}
\begin{equation}
\Delta_{ij}(t) = \left(\Delta_{ij}^0 \right)^{1-t} \left(\Delta_{ij}^1\right)^t
\label{eq: geometric interpolation}
\end{equation}
except for $\Delta_{24}(t)$, which must be given by   
\begin{equation*}
\Delta_{24}(t)  = \frac{\Delta_{12}(t) \Delta_{34}(t) + \Delta_{14}(t) \Delta_{23}(t)}{\Delta_{13}(t)}
\end{equation*}
in order to ensure that the $\Delta_{ij}(t)$ obey the Pl\"ucker relation~\eqref{eq: plucker for quadrilaterals}.  Since the $\Delta_{ij}^0$ and $\Delta_{ij}^1$ are positive, it is straightforward to check that all the $\Delta_{ij}(t)$ are also positive.

We must now prove that the $(AA^T)_{ij}(t)$ have the correct signs. As in~\eqref{eq:projection matrix formula}, we can write each  $(AA^T)_{ij}(t)$ in terms of the $\Delta_{ij}(t)$ and, since $\Delta_{ij}(t) > 0$, only 
\begin{align*}
\left(AA^T\right)_{13}(t) &= -\Delta_{12}(t) \Delta_{23}(t) + \Delta_{14}(t) \Delta_{34}(t) \text{ and} \\
\left(AA^T\right)_{24}(t) &= \Delta_{12}(t) \Delta_{14}(t) - \Delta_{23}(t) \Delta_{34}(t)
\end{align*}
could change sign. So it suffices to show $\left(AA^T\right)_{13}(t) > 0$ and $\left(AA^T\right)_{24}(t) >0$ knowing that these inequalities are satisfied for $t=0$ and $t=1$. Rearranging, this is equivalent to showing that for all $t$,
\begin{equation}
\frac{\Delta_{14}(t)}{\Delta_{23}(t)} > \frac{\Delta_{12}(t)}{\Delta_{34}(t)} \quad\text{and}\quad
\frac{\Delta_{14}(t)}{\Delta_{23}(t)} > \frac{\Delta_{34}(t)}{\Delta_{12}(t)}.
\label{eq: quotient inequalities}
\end{equation} 

The form of these inequalities explains why we chose logarithmic interpolation rather than linear interpolation: linearly interpolating the numerator and denominator of a fraction has a complicated effect on the quotient, while logarithmically interpolating the numerator and denominator logarithmically interpolates their quotient. So
\begin{equation*}
\frac{\Delta_{ij}(t)}{\Delta_{kl}(t)} = \frac{\left(\Delta_{ij}^0\right)^{1-t} \left(\Delta_{ij}^1\right)^t}{\left(\Delta_{kl}^0\right)^{1-t} \left(\Delta_{kl}^1\right)^t} = \left( \frac{\Delta_{ij}^0}{\Delta_{kl}^0} \right)^{1-t} \left(\frac{\Delta_{ij}^1}{\Delta_{kl}^1} \right)^t.
\end{equation*}
This means that both sides of the inequalities in~\eqref{eq: quotient inequalities} are being logarithmically interpolated between values at $t=0$ and $t=1$ where the inequalities are obeyed. It is a general fact about logarithmic interpolation that this implies the inequalities are satisfied for intermediate values of $t$. Proving this is a fun exercise: start by taking the logarithm of the inequalities in~\eqref{eq: quotient inequalities}.
\end{proof} 

 We can now see that the sign cells have geometric meaning at the level of quadrilaterals:

\begin{proposition}\label{prop: sign cell determines class}
All of the quadrilaterals in any given sign cell are either convex, reflex, or self-intersecting.
\end{proposition}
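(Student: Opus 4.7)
The plan is to combine the path-connectedness of sign cells established in Proposition~\ref{prop: sign cells are path connected} with the observation, made in the paragraph introducing the projection matrix, that the boundaries between the convex, reflex, and self-intersecting classes consist exactly of quadrilaterals having two parallel edges. In Pl\"ucker/projection language, edges $e_i = (u_i+v_i\I)^2$ and $e_j = (u_j+v_j\I)^2$ point in the same or opposite direction precisely when $z_i/z_j = (u_i+v_i\I)/(u_j+v_j\I)$ is real or purely imaginary, which happens exactly when the rows $(u_i,v_i)$ and $(u_j,v_j)$ of $A$ are either colinear ($\Delta_{ij}=0$) or perpendicular ($(AA^T)_{ij}=0$). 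Thus every class-boundary quadrilateral corresponds to a plane at which some off-diagonal entry of $\Delta$ or $AA^T$ vanishes.

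First I would make precise that within a single sign cell every off-diagonal entry of $\Delta$ and of $AA^T$ has a fixed nonzero sign by definition, so no plane in the sign cell can correspond to a boundary quadrilateral. Next, given any two planes $P_0, P_1$ in the same sign cell, I would invoke Proposition~\ref{prop: sign cells are path connected} to obtain a continuous path $P(t)$ inside the sign cell joining them. Pushing forward under the Knutson--Hausmann map gives a continuous path $Q(t)$ of quadrilaterals that never meets the boundary set; since the classification (convex/reflex/self-intersecting) is locally constant on the complement of that boundary, $Q(0)$ and $Q(1)$ lie in the same class.

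The main obstacle is justifying the initial claim that every class transition is witnessed by a pair of parallel edges. Concretely, a convex-to-reflex transition occurs exactly when a vertex crosses the line through the two neighboring vertices, which forces three consecutive vertices to become colinear and hence two adjacent edges to be (anti-)parallel; a (non)self-intersecting-to-self-intersecting transition occurs when a pair of non-adjacent edges first touches, which degenerates to a vertex-on-edge configuration and again forces three consecutive vertices to be colinear. In both cases the degenerate configuration has two adjacent edges parallel, which is captured by the vanishing of some $\Delta_{ij}$ or $(AA^T)_{ij}$. Recording this planar-geometry lemma carefully is the one piece of genuine content; once it is in place, the rest of the argument is just continuity plus Proposition~\ref{prop: sign cells are path connected}.
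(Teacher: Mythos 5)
Your proposal is correct and follows essentially the same route as the paper: identify the walls between sign cells (vanishing $\Delta_{ij}$ or $(AA^T)_{ij}$) with pairs of edges pointing in the same or opposite directions, then use the path-connectedness from Proposition~\ref{prop: sign cells are path connected} to conclude that the class is constant on each cell. The only difference is that you sketch a justification of the planar-geometry fact that a class transition forces two edges to become (anti-)parallel, a step the paper simply asserts.
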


\begin{proof}
The walls between sign cells consist of planes with a Pl\"ucker coordinate or entry in the projection matrix equal to zero. Recall that the edges of the polygon $e_i$ are equal to the squares of the complex numbers $z_i = u_i + \I v_i$. 

When the Pl\"ucker coordinate $\Delta_{ij} = 0$, two rows $(u_i,v_i)$ and $(u_j,v_j)$ are colinear, and $z_i = \lambda z_j$ for some real $\lambda$. Squaring, we see that $e_i = \lambda^2 e_j$, and the edges $e_i$ and $e_j$ point in the same direction.

When the projection coordinate $(AA^T)_{ij} = 0$, two rows $(u_i,v_i)$ and $(u_j,v_j)$ are perpendicular, and $z_i = \lambda \I z_j$ for some real $\lambda$. Squaring, we see that now $e_i = -\lambda^2 e_j$, and the edges $e_i$ and $e_j$ point in opposite directions.

Since we have showed that the sign cells are path-connected in Proposition~\ref{prop: sign cells are path connected}, the polygons in each sign cell can be deformed to one another without making any pair of edge directions parallel or antiparallel.  But to transition between convex, reflex, and self-intersecting, two adjacent edges must point in the same or opposite directions.
\end{proof}

We pause for a minute to appreciate the significance of this result. We now know that $G_2(\R^4)$ is broken up into 96 isometric pieces, each of which corresponds to a collection of quadrilaterals which are all convex, all reflex, or all self-intersecting. This means that we can determine which category \emph{all} elements of a given sign cell are in by choosing a \emph{particular} element of the sign cell and determining whether it is convex, reflex, or self-intersecting.

Since each of the sign cells is the image of the base sign cell under the action of 4 different elements of the hyperoctahedral group $B_4$ (namely, conjugates of the stabilizer of the base sign cell), we can choose this preferred element of each sign cell by choosing a preferred element of the base sign cell and then moving it around by hyperoctahedral elements.

In other words, to see what fraction of quadrilaterals are convex, reflex, and self-intersecting, we simply need to look at the 96 images of the special quadrilateral in the base sign cell and determine which fraction fall into each category. The answer is simple and pleasant:

\begin{theorem}\label{thm:quadrilateral classes}
Of the 96 sign cells in $G_2(\R^4)$, convex, reflex, and self-intersecting quadrilaterals each compose $32$ cells. Hence, the probabilities that a randomly selected quadrilateral is convex, reflex, or self-intersecting are each equal to $1/3$.
\end{theorem}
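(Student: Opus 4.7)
The plan is to leverage the setup: by Proposition \ref{prop: sign cell determines class} every sign cell is entirely convex, reflex, or self-intersecting, and by Proposition \ref{prop: sign cells} every sign cell has the same measure. So the theorem reduces to counting $32$ sign cells of each type. Theorem \ref{thm:convex} already supplies the convex count: eight copies of $G_2(\R^4)^+$, each containing four sign cells, total $32$. It remains to show $\mu(\text{reflex}) = \mu(\text{self-intersecting}) = 1/3$, from which the $32$-cell count follows for each.

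My intended route is a small combinatorial lemma: \emph{for any generic four edge-vectors summing to zero in $\R^2$, the three distinct polygon orderings (cosets of $D_4$ in $S_4$) realize exactly one convex, one reflex, and one self-intersecting quadrilateral.} I would prove this by labeling the edges $a, b, c, d$ in counterclockwise angular order with $\theta_a = 0$ and computing the turning number of each ordering. The closure condition $a + b + c + d = 0$ rules out any open half-plane containing all four directions, which forces every consecutive angular gap to be strictly less than $\pi$; this leaves $A := [\theta_d - \theta_b > \pi]$ and $C := [\theta_c > \pi]$ as the only free binary parameters. Reducing each exterior angle to $(-\pi, \pi]$ and summing gives $N_{abcd} = 1$, $N_{abdc} = -A + C$, and $N_{acbd} = 1 - A - C$. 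Checking the four cases of $(A,C) \in \{0,1\}^2$ shows that exactly one of $abdc, acbd$ has turning number $0$ (a bowtie, hence self-intersecting) while the other has turning number $\pm 1$; since in each case the exterior angles have mixed signs (the turn from $\theta_d$ down to $\theta_c$ in $abdc$, or from $\theta_c$ down to $\theta_b$ in $acbd$, is always negative), that polygon is reflex rather than convex.

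Granting the lemma, the count follows from the $S_4$-invariance of the symmetric measure on polygon space (a consequence of the proposition immediately preceding Proposition \ref{prop:plucker action}). Write $C$ for the convex class and pick representatives $t_1, t_2$ of the two nontrivial cosets of $D_4 \subset S_4$. The lemma partitions $C = C_1 \sqcup C_2$, where $C_1 = \{Q \in C : t_1 Q \text{ is reflex}\}$, so that automatically $t_2 Q$ is self-intersecting for $Q \in C_1$ and the roles swap for $Q \in C_2$. Then the reflex class is $t_1 C_1 \sqcup t_2 C_2$ and the self-intersecting class is $t_1 C_2 \sqcup t_2 C_1$; measure-invariance yields $\mu(\text{reflex}) = \mu(C_1) + \mu(C_2) = \mu(C) = 1/3$, and symmetrically for the self-intersecting class. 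Multiplying by the $96$ equiprobable sign cells gives $32$ sign cells of each type.

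The hard part will be the turning-number bookkeeping in the lemma: carefully tracking which of the four exterior angles acquires a $\pm 2\pi$ wrap upon reduction, and using the half-plane obstruction to eliminate the parameter combinations $[\theta_d - \theta_c > \pi]$ and $[\theta_c - \theta_b > \pi]$ (both forced to $0$). An entirely computational alternative would follow the excerpt's hint: take an explicit convex representative $Q^*$ in the base sign cell, apply $96$ coset representatives of the base stabilizer in $B_4$, and tabulate the types of the resulting quadrilaterals---the counts should again come out $32, 32, 32$.
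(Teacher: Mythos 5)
Your proof is correct, and its central step is genuinely different from the paper's. The paper performs the same reduction you set up --- sign cells are equiprobable and homogeneous in type, the $(\Z/2\Z)^4$ part of $B_4$ only changes lifts, so everything comes down to the $24$ edge-permutations of one representative of the base sign cell --- but it then finishes \emph{computationally}: it takes the flag mean of $10{,}420$ sampled points as the representative and verifies by inspecting Figure~\ref{fig:24 quadrilaterals} that $8$ of the $24$ permuted quadrilaterals are convex, $8$ reflex, and $8$ self-intersecting. You instead prove a general lemma: for every generic zero-sum quadruple of edge vectors, the three cosets of the order-$8$ dihedral group in $S_4$ produce exactly one convex, one reflex, and one self-intersecting quadrilateral. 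Your bookkeeping checks out: closure forces all four consecutive angular gaps below $\pi$, the turning numbers $1$, $-A+C$, and $1-A-C$ are correct, and in each of the four $(A,C)$ cases exactly one of the last two vanishes while the other is $\pm 1$ with exterior angles of mixed sign. The one point you should make explicit is that turning number $\pm 1$ implies the quadrilateral is \emph{simple} (a generic non-simple quadrilateral has exactly one crossing, necessarily of a pair of opposite edges, hence is a bowtie with turning number $0$); with that, ``turning number $0$'' means self-intersecting and ``turning number $\pm 1$ with mixed signs'' means reflex. Combined with the $S_4$-invariance of the measure and $\mu(\mathrm{convex})=1/3$ from Theorem~\ref{thm:convex}, this yields the probabilities without exhibiting any representative, and the $32$-cells-per-class count then follows from equiprobability of the $96$ cells. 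What you give up is the explicit ideal kite and the concrete $A_3$-orbit picture the paper uses afterward to describe unordered quadrilateral space; what you gain is a proof that does not depend on a numerically computed example or on reading a figure, and that in effect makes rigorous Wilson's 1866 edge-based argument, which the paper cites approvingly in its historical discussion but never actually carries out.
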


\begin{proof}
	As discussed above, it suffices to choose an element of the base sign cell and examine its images under the action of the hyperoctahedral group. While this will produce points in each of the 96 different sign cells, many of the resulting quadrilaterals will be identical: after all, $B_4 \simeq (\Z/2)^4 \rtimes S_4$, but the normal subgroup $(\Z/2)^4$ corresponds to different lifts of the quadrilateral, which give different points on the Grassmannian that map to the same quadrilateral. 
	
	Hence, we can simply act on our chosen element of the base sign cell by the quotient $S_4$, which just acts by permuting the edges. Since each of the 96 quadrilaterals we are tabulating is identical to one of these 24 permutation images, the theorem will follow if we see that 8 are convex, 8 are reflex, and 8 are self-intersecting.
	
	Since we can generate random points in $G_2(\R^4)$, it is easy to pick a random element of the base sign cell. But the most beautiful such choice would be a ``center of mass'' or ``average'' plane in the sign cell. 
	
	(We pause for a moment to reflect on the fact that before we started, the project of defining the average of a collection of quadrilaterals would have been rather daunting. But since we are working on the Grassmannian, we have a powerful collection of tools adapted to exactly these problems!)

	One definition of an average of a finite collection of subspaces of $\R^n$ is given by the \emph{flag mean} of the points. Given $n \times 2$ orthonormal matrices $A_1, \dots, A_m$ giving bases for the subspaces, a basis for the 2-dimensional flag mean subspace is given by the two (left) singular vectors of the $n \times 2m$ matrix $(A_1 \dots A_m)$. The flag mean is used in signal processing, and has beautiful mathematical properties; see~\cite{Marrinan:2014kf} and \cite{Draper:2014gu}.

We computed the flag mean of 10,420 points sampled uniformly from the base sign cell to be the quadrilateral with edge vectors approximately $(0.33, -0.59)$, $(-0.29, -0.13)$, $(-0.30, 0.11)$, and $(0.26, 0.62)$. This quadrilateral and its 23 companions given by permuting the edges form a collection of ideal quadrilaterals representating each of the sign cells. We complete the proof by presenting these quadrilaterals in Figure~\ref{fig:24 quadrilaterals}. The reader can easily verify that 8 are convex, 8 are reflex, and 8 are self-intersecting. 
\end{proof}

\begin{figure}[htbp]
	\centering
		\includegraphics[scale=0.5]{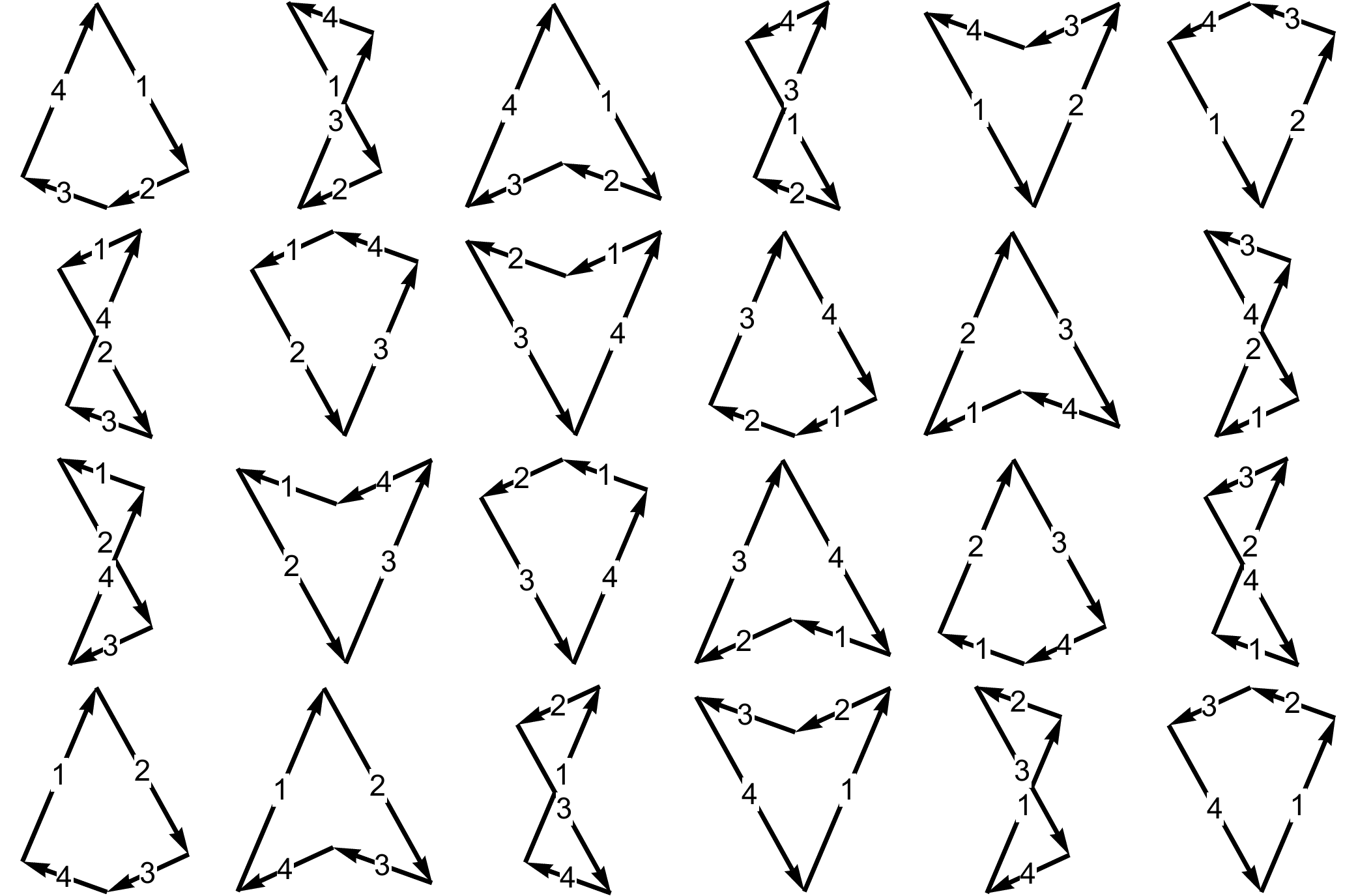}
	\caption{The permutation group orbit of the quadrilateral corresponding to the flag mean of the base sign cell. The flag mean of every sign cell corresponds to a quadrilateral congruent to one of these 24 ordered quadrilateral. By inspection, $1/3$ are convex, $1/3$ are reflex, and $1/3$ are self-intersecting.}
	\label{fig:24 quadrilaterals}
\end{figure}

Figure~\ref{fig:24 quadrilaterals} is a bit disappointing at first: despite our promise of 24 different quadrilaterals there are really only 3 that are geometrically different, one in each class. But this is easily explained: cyclically permuting the edges of a quadrilateral or reversing their order produces an ordered quadrilateral which is congruent to the original, so the standard copy of the dihedral group $D_8$ inside $S_4$ produces congruent quadrilaterals. The fact that we see 3 geometrically distinct quadrilaterals thus boils down to the fact that $|S_4|/|D_8| = 3$.

We leave it to the reader to check that the order 3 subgroup $A_3 = \{1,(123),(132)\}$ is a transversal of the dihedral group $D_8$ inside $S_4$, and hence that the three geometrically distinct quadrilaterals in Figure~\ref{fig:24 quadrilaterals} can be obtained by applying these three permutations to our chosen representative of the base sign cell.

We have now given our solution to Sylvester's problem. But notice that we have actually done much more: we have given an explicit geometry to the space of unordered (length 2) quadrilaterals by identifying the convex, reflex, and self-intersecting quadrilaterals as the three isometric Riemannian manifolds with boundary comprising the $A_3$-orbit of the base sign cell, each a Riemannian submanifold of $G_2(\R^4)$.

\section{Conclusion and Open Questions}
Our journey has taken us through some beautifully concrete applications of the Grassmannian picture of planar polygons, but there is a still a vast landscape to explore. Even for triangles, there are still a number of interesting questions open. First, it would be really interesting to be able to characterize the distance between triangles (and the effect of an arbitrary rotation of the sphere) directly in terms of triangle geometry. It is clear that you can write a rotation of the sphere in terms of a conserved quantity which you can express in terms of sidelengths of the triangle (we leave the exercise to the reader). But what does this formula mean? Second, it is tempting to go back and reprove many of the standard algebraic identities connecting various measurements of the triangle in terms of these variables (see, for instance, \cite{8680145718930201} for a trove of such identities concerning the inradii and exradii).

For quadrilaterals, a number of interesting open questions remain. The space $G_2(\R^4)$ has an interesting involution: take each plane to the perpendicular one. This gives rise to an involution on quadrilaterals which seems fascinating to explore. Many of our statements about the structure of $G_2(\R^4)$ should generalize to $G_2(\R^n)$. For instance, it seems clear that the sign cells of $G_2(\R^n)$ should be path-connected. Can you prove it? 

This metric on plane polygons can be extended to a corresponding metric on plane curves, which is used in shape recognition and classification. The paper of Younes et al.~\cite{Younes:2008gy} is a great place to start reading about this topic. The flag mean, and other tools from signal processing, also seem to have fruitful applications in polygon space. For starters: can you define the flag mean of an arbitrary subset of the Grassmannian by integration rigorously? If so,  is the flag mean of the base sign cell the kite we show above?

Several authors, notably Hausmann and Knutson~\cite{Knutson:2_iyExxE}, Kapovich and Millson~\cite{Kapovich:1996p2605}, and Howard et al.~\cite{Howard:2008uy}, have extended this structure to space polygons, and specialized it to polygons with fixed edgelengths. In our own papers~\cite{Cantarella:2014bl,Cantarella:2016bt,Cantarella:2012vn,Cantarella:2013wl,Needham:2016tg,Needham:2017vn} we have developed the sampling and integration theory for these polygon spaces and extended the theory to space curves. Space polygons of fixed edgelength form a symplectic manifold, which begins another long and fascinating story. Interestingly, that manifold seems to genuinely have more structure than the space of \emph{planar} polygons with fixed edgelengths, which remains somewhat mysterious. We hope to address fixed edgelength planar polygons in more detail in a future publication. 

\section*{Acknowledgments}
We are grateful to the MAA for the invitation to present some of this paper as an invited address at the 2017 Joint Math Meetings and to the Simons Foundation for their support of Cantarella and Shonkwiler. In addition, we'd like to thank the many colleagues who have helped us understand Grassmannians and the triangle problem, including Harrison Chapman, Rebecca Goldin, Ben Howard, Chris Manon, John McCleary, Chris Peterson, Stu Whittington, and Seth Zimmerman.

\bibliography{papers-export,triangles-special}

\begin{thebibliography}{10}

\bibitem{ArkaniHamed:2016ey}
Nima Arkani-Hamed, Jacob Bourjaily, Freddy Cachazo, Alexander Goncharov,
  Alexander Postnikov, and Jaroslav Trnka.
\newblock {\em {Grassmannian Geometry of Scattering Amplitudes}}.
\newblock Cambridge University Press, Cambridge, 2016.

\bibitem{Blaschke:1917tm}
Wilhelm Blaschke.
\newblock {{\"U}ber affine Geometrie XI: L{\"o}sung des
  {\textquotedblleft}Vierpunktproblems{\textquotedblright} von Sylvester aus
  der Theorie der geometrischen Wahrscheinlichkeiten}.
\newblock {\em Leipziger Berichte}, 69:436--453, 1917.

\bibitem{Cantarella:2014bl}
Jason Cantarella, Tetsuo Deguchi, and Clayton Shonkwiler.
\newblock {Probability theory of random polygons from the quaternionic
  viewpoint}.
\newblock {\em Communications on Pure and Applied Mathematics},
  67(10):1658--1699, 2014.

\bibitem{Cantarella:2016bt}
Jason Cantarella, Bertrand Duplantier, Clayton Shonkwiler, and Erica Uehara.
\newblock {A fast direct sampling algorithm for equilateral closed polygons}.
\newblock {\em Journal of Physics A: Mathematical and Theoretical},
  49(27):1--9, May 2016.

\bibitem{Cantarella:2012vn}
Jason Cantarella, Alexander~Y Grosberg, Robert Kusner, and Clayton Shonkwiler.
\newblock {The expected total curvature of random polygons}.
\newblock {\em American Journal of Mathematics}, 137(2):411--438, 2015.

\bibitem{Cantarella:2013wl}
Jason Cantarella and Clayton Shonkwiler.
\newblock {The symplectic geometry of closed equilateral random walks in
  3-space}.
\newblock {\em The Annals of Applied Probability}, 26(1):549--596, 2016.

\bibitem{DeMorgan:1871vv}
Augustus De~Morgan.
\newblock {On Infinity; and on the Sign of Equality}.
\newblock {\em Transactions of the Cambridge Philosophical Society},
  11:145--189, 1871.

\bibitem{Draper:2014gu}
Bruce Draper, Michael Kirby, Justin Marks, Tim Marrinan, and Chris Peterson.
\newblock {A flag representation for finite collections of subspaces of mixed
  dimensions}.
\newblock {\em Linear Algebra and its Applications}, 451(C):15--32, June 2014.

\bibitem{Edelman:2015td}
Alan Edelman and Gilbert Strang.
\newblock {Random triangle theory with geometry and applications}.
\newblock {\em Foundations of Computational Mathematics}, 15(3):681--713, 2015.

\bibitem{Guy:1993ep}
Richard~K Guy.
\newblock {There are three times as many obtuse-angled triangles as there are
  acute-angled ones}.
\newblock {\em Mathematics Magazine}, 66(3):175--179, 1993.

\bibitem{Hall:1982id}
Glen~Richard Hall.
\newblock {Acute triangles in the $n$-ball}.
\newblock {\em Journal of Applied Probability}, 19(3):712--715, September 1982.

\bibitem{Knutson:2_iyExxE}
Jean-Claude Hausmann and Allen Knutson.
\newblock {Polygon spaces and Grassmannians}.
\newblock {\em L'Enseignement Math\'ematique. Revue Internationale. 2e
  S\'erie}, 43(1-2):173--198, 1997.

\bibitem{Howard:2008uy}
Benjamin Howard, Christopher Manon, and John~J Millson.
\newblock {The toric geometry of triangulated polygons in Euclidean space}.
\newblock {\em Canadian Journal of Mathematics}, 63(4):878--937, August 2011.

\bibitem{Ingleby:1866ul}
Clement~Mansfield Ingleby.
\newblock {Correction of an inaccuracy in Dr. Ingleby's Note on the Four-point
  Problem}.
\newblock {\em Mathematical Questions with Their Solutions from the Educational
  Times}, 5:108--109, 1866.

\bibitem{Kapovich:1996p2605}
Michael Kapovich and John~J Millson.
\newblock {The symplectic geometry of polygons in Euclidean space}.
\newblock {\em Journal of Differential Geometry}, 44(3):479--513, 1996.

\bibitem{Kendall:1985ep}
David~G Kendall.
\newblock {Exact distributions for shapes of random triangles in convex sets}.
\newblock {\em Advances in Applied Probability}, 17(2):308--329, 1985.

\bibitem{Kleiman:1972wb}
Steven~L Kleiman and Dan Laksov.
\newblock {Schubert calculus}.
\newblock {\em Amer. Math. Monthly}, 79(10):1061--1082, 1972.

\bibitem{8680145718930201}
John~Sturgeon Mackay.
\newblock {Formulae connected with the Radii of the incircle and the excircles
  of a triangle}.
\newblock {\em Proceedings of the Edinburgh Mathematical Society}, 12:86--105,
  1893.

\bibitem{Marrinan:2014kf}
Tim Marrinan, J~Ross Beveridge, Bruce Draper, Michael Kirby, and Chris
  Peterson.
\newblock {Finding the subspace mean or median to fit your need}.
\newblock In {\em Proceedings of the IEEE Conference on Computer Vision and
  Pattern Recognition}, pages 1082--1089. IEEE, 2014.

\bibitem{Needham:2016tg}
Thomas Needham.
\newblock {\em {Grassmannian Geometry of Framed Curve Spaces}}.
\newblock PhD thesis, University of Georgia, May 2016.

\bibitem{Needham:2017vn}
Thomas Needham.
\newblock {K{\"a}hler structures on spaces of framed curves}.
\newblock preprint, {\tt arXiv:1701.03183 [math.DG]}, 2017.

\bibitem{Pfiefer:1989ha}
Richard~E Pfiefer.
\newblock {The historical development of J. J. Sylvester's four point problem}.
\newblock {\em Mathematics Magazine}, 62(5):309--317, 1989.

\bibitem{Portnoy:1994vh}
Stephen Portnoy.
\newblock {A Lewis Carroll pillow problem: probability of an obtuse triangle}.
\newblock {\em Statistical Science. A Review Journal of the Institute of
  Mathematical Statistics}, 9(2):279--284, 1994.

\bibitem{Sylvester:1864cz}
James~Joseph Sylvester.
\newblock {Algebraical Researches, Containing a Disquisition on Newton's Rule
  for the Discovery of Imaginary Roots, and an Allied Rule Applicable to a
  Particular Class of Equations, Together with a Complete Invariantive
  Determination of the Character of the Roots of the General Equation of the
  Fifth Degree, {\&}c}.
\newblock {\em Philosophical Transactions of the Royal Society of London},
  154:579--666, 1864.

\bibitem{Sylvester:1864vj}
James~Joseph Sylvester.
\newblock {Mathematical Question 1491}.
\newblock {\em The Educational Times and Journal of the College of Preceptors},
  17(37):20, April 1864.

\bibitem{Sylvester:1866vu}
James~Joseph Sylvester.
\newblock {On a Special Class of Questions on the Theory of Probabilities}.
\newblock {\em Report of the British Association for the Advancement of
  Science}, 35:8--9, 1866.

\bibitem{Watson:1862ur}
Stephen Watson.
\newblock {Answer to Mathematical Question 1987}.
\newblock {\em The Lady's and Gentleman's Diary}, 159:66--68, 1862.

\bibitem{Wilson:1866vj}
James~Maurice Wilson.
\newblock {On the Four-point problem and similar Geometrical Chance Problems}.
\newblock {\em Mathematical Questions with Their Solutions from the Educational
  Times}, 5:81, 1866.

\bibitem{Woolhouse:1861we}
Wesley Stoker~Barker Woolhouse.
\newblock {Mathematical Question 1987}.
\newblock {\em The Lady's and Gentleman's Diary}, 158:76, 1861.

\bibitem{Woolhouse:1865vw}
Wesley Stoker~Barker Woolhouse.
\newblock {Mathematical Question 1835}.
\newblock {\em The Educational Times and Journal of the College of Preceptors},
  18(56):189, November 1865.

\bibitem{Woolhouse:1866uw}
Wesley Stoker~Barker Woolhouse.
\newblock {Solution to Mathematical Question 1835}.
\newblock {\em Mathematical Questions with Their Solutions from the Educational
  Times}, 5:70--71, 1866.

\bibitem{Younes:2008gy}
Laurent Younes, Peter~W Michor, Jayant Shah, and David Mumford.
\newblock {A metric on shape space with explicit geodesics}.
\newblock {\em Atti della Accademia Nazionale dei Lincei, Classe di Scienze
  Fisiche, Matematiche e Naturali, Rendiconti Lincei Matematica e
  Applicazioni}, 19(1):25--57, December 2008.

\end{thebibliography}

\end{document}